\newtheorem{theorem}{Theorem}
\newtheorem{lemma}[theorem]{Lemma}
\newtheorem{proposition}[theorem]{Proposition}
\newtheorem{question}[theorem]{Question}
\newtheorem{definition}[theorem]{Definition}
\newtheoremstyle{note}
{3pt}
{3pt}
{}
{}
{\bf}
{:}
{.5em}
{}
\theoremstyle{note}
\newtheorem{remark}[theorem]{Remark}
\numberwithin{equation}{section}
\begin{document}

\newcommand{\cc}{\mathfrak{c}}
\newcommand{\N}{\mathbb{N}}
\newcommand{\BB}{\mathbb{B}}
\newcommand{\C}{\mathbb{C}}
\newcommand{\Q}{\mathbb{Q}}
\newcommand{\R}{\mathbb{R}}
\newcommand{\T}{\mathbb{T}}
\newcommand{\st}{*}
\newcommand{\PP}{\mathbb{P}}
\newcommand{\rin}{\right\rangle}
\newcommand{\SSS}{\mathbb{S}}
\newcommand{\forces}{\Vdash}
\newcommand{\dom}{\text{dom}}
\newcommand{\osc}{\text{osc}}
\newcommand{\F}{\mathcal{F}}
\newcommand{\A}{\mathcal{A}}
\newcommand{\B}{\mathcal{B}}
\newcommand{\I}{\mathcal{I}}
\newcommand{\X}{\mathcal{X}}
\newcommand{\Y}{\mathcal{Y}}
\newcommand{\Z}{\mathcal{Z}}
\newcommand{\CC}{\mathcal{C}}

\thanks{The author was partially supported by the NCN (National Science
Centre, Poland) research grant no.\ 2020/37/B/ST1/02613.}
\thanks{The author would like to thank H.M. Wark for comments on a preliminary version
of this paper  which allowed the subsequent versions to be improved.}

\subjclass[2010]{46B20, 46E15, 03E75, 46B26, 54D80}
\title{Banach spaces in which large subsets of spheres concentrate}
\author{Piotr Koszmider}
\address{Institute of Mathematics of the Polish Academy of Sciences,
ul. \'Sniadeckich 8,  00-656 Warszawa, Poland}
\email{\texttt{piotr.koszmider@impan.pl}}

\begin{abstract} 
We construct a nonseparable Banach space $\X$ (actually, of density continuum) such that 
any uncountable subset $\Y$ of the unit sphere of $\X$ contains uncountably many
points distant by  less than $1$ (in fact, by less then $1-\varepsilon$ for some $\varepsilon>0$).
This solves in the negative the central problem of the search for a nonseparable version 
of Kottman's theorem which so far has produced many 
deep positive results for special classes of Banach spaces and has related 
the global properties of the spaces to the distances between points  of uncountable subsets of 
the unit sphere. The property
of our space is strong enough to imply that it contains neither an uncountable Auerbach system nor
an uncountable equilateral set. The space is a strictly convex renorming of the Johnson-Lindenstrauss
space induced by an $\R$-embeddable almost disjoint family of subsets of $\N$. We also show that
 this special feature of the almost disjoint family is essential to obtain the above properties.
\end{abstract}

\maketitle

\section{Introduction}

All Banach spaces considered in this paper are infinite dimensional and over the reals.
For unexplained terminology see Section \ref{preliminaries}. 
 If $\X$ is a Banach space,  $\Y\subseteq \X$ and $r>0$, we say that $\Y$ is 
\begin{itemize}
\item $(r+)$-separated if $\|y-y'\|>r$, for any two distinct
$y, y'\in \Y$,
\item $r$-separated if $\|y-y'\|\geq r$, for any two distinct
$y, y'\in \Y$,
\item $r$-concentrated if $\|y-y'\|\leq r$, for any two distinct
$y, y'\in \Y$,
\item $r$-equilateral if $\|y-y'\|=r$ for any two distinct
$y, y'\in \Y$,
\item equilateral if it is $r$-equilateral for some $r>0$.
\end{itemize}
The classical Riesz Lemma of 1916 (If $\Y$ is a closed proper subspace of $\X$ and $\varepsilon>0$,
 then there is $x$ in the unit sphere of $\X$ such that the distance of $x$ from $\Y$ does
  exceed $1-\varepsilon$,  \cite{riesz})
allows one to construct $(1-\varepsilon)$-separated sets  of the cardinality equal to the density of a Banach space  and in its unit sphere.
By the compactness of balls in finite dimensional spaces it also yields infinite $1$-separated sets
is any infinite dimensional Banach space.
Kottman proved in 1975 (\cite{kottman}) that the unit sphere of 
every infinite dimensional Banach space admits an infinite
 $(1+)$-separated subset, which was improved in 1981  by Elton and Odell to $(1+\varepsilon)$-separated for
 some $\varepsilon>0$ (\cite{elton-odell}) who also noted
 that $c_0(\omega_1)$ does not admit an uncountable
 $(1+\varepsilon)$-separated set. The Kottman constant of a Banach space (the supremum over
 $\delta>0$ such that there is an infinite $\delta$-separated subset of the unit sphere)
 turned out to be an important tool used for investigating the geometry of the space
 (e.g., \cite{castillo, kryczka, unifconvex, papini1, papini2}).
 In fact, it is related to many aspects of Banach spaces such as, e.g., 
 packing balls, measures of incompactness, fixed points, average
distances  and infinite dimensional convexity (see e.g., the papers citing  \cite{kottman2} or
\cite{elton-odell}).
 
 It has been clear, at least since the paper \cite{mer-ck}  of Mercourakis and Vassiliadis,  that  
 the nature of separation  in uncountable subsets of the unit sphere of a nonseparable
 Banach space could be equally indicative of the global and diverse properties
  of the space as in the separable case. 
  The question of whether the unit sphere of a nonseparable Banach space must contain an uncountable
  $(1+)$-separated set, and if so, of what cardinality compared to the density of the space, has been studied
 for various classes of Banach spaces 
 e.g., in \cite{cuth, tt, mer-pams, mer-ck} and recently culminated in the
  paper \cite{hajek-tams}, where it is highlighted as a central question.  Notably,
 the existence of $(1+\varepsilon)$-separated sets of the size equal to the density of the
 space was proved for super-reflexive Banach spaces by
 Kania and Kochanek in \cite{tt} and for $C(K)$ spaces,
  where $K$ is compact Hausdorff and totally disconnected  by Mercourakis and Vassiliadis. 
  Moreover  H\'ajek, Kania and Russo proved  in \cite{hajek-tams}  that 
 uncountable $(1+)$-separated sets exist in any Banach space of sufficiently large density.
However, we provide quite a strong negative answer to the general question:
 
 \begin{theorem}\label{main} There is a (strictly convex) Banach space of density continuum where every uncountable subset
 of the unit sphere of regular cardinality $\kappa$ includes a  subset of the same cardinality  which is $(1-\varepsilon)$-concentrated for
 some $\varepsilon>0$.
 \end{theorem}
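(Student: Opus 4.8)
The plan is to build the space explicitly as a strictly convex renorming of a Johnson--Lindenstrauss-type space $JL(\A)$ associated to a cleverly chosen almost disjoint family $\A$ on $\N$. Recall that $JL(\A)$ is the closed subspace of $\ell_\infty$ generated by $c_0$ together with the characteristic functions $\chi_A$ for $A\in\A$; the quotient $JL(\A)/c_0$ is isometric to $c_0(\A)$. The abstract obtained in the excerpt signals that the crucial feature is that $\A$ be \emph{$\R$-embeddable}, meaning the index set $\A$ carries a linear order making each $A\in\A$ converge, along that order, in a controlled way; equivalently one can realize $\A$ as a family of branches through a tree embeddable in $\R$. First I would fix such an $\R$-embeddable almost disjoint family $\A$ of size continuum.

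Next I would introduce the renorming. On $JL(\A)$ the natural norm $\|x\|=\max(\|x\|_\infty, \sup_{A}|x^*_A(x)|)$ (where $x^*_A$ reads off the asymptotic coefficient of $\chi_A$) is not strictly convex; the point of the construction is to perturb it by a strictly convex term that is small in the $c_0$-directions but genuinely sees the $\A$-coordinates. Concretely I would define
\[
|||x|||^2 \;=\; \|x\|^2 + \sum_{n} 2^{-n}\, x(n)^2 \;+\; \Phi(x),
\]
where $\Phi$ is a quadratic form on the quotient coordinates designed to force strict convexity while still being compatible with the order structure of $\A$. The role of strict convexity is to guarantee that distinct points of the sphere that agree on their dominant $\A$-coordinate are automatically close.

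The heart of the argument is the concentration property. Given an uncountable $\Y$ in the unit sphere of regular cardinality $\kappa$, each $y\in\Y$ determines, via its $\A$-coordinates, a finitely-or-countably supported pattern in $c_0(\A)$; since only countably many coordinates of each $y$ are non-negligible, a $\Delta$-system / pressing-down argument on the supports yields a subfamily $\Y'\subseteq\Y$ of size $\kappa$ whose dominant $\A$-coordinates lie on a common ``root'' and whose tails are order-aligned. The $\R$-embeddability is exactly what lets me refine further so that the non-root coordinates of distinct elements are separated \emph{along the order}, hence almost orthogonal in the renorming, which forces the differences $y-y'$ to be controlled by the $c_0$-part. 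A final elementary estimate, using that the renorming is uniformly close to the original norm and that both points have norm $1$, then bounds $|||y-y'|||\le 1-\varepsilon$ uniformly on $\Y'$.

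The main obstacle I anticipate is the \emph{uniformity} of the constant $\varepsilon$: one must show a single $\varepsilon>0$ works for \emph{every} uncountable $\Y$ and every pair in the extracted $\Y'$, not merely that distances can be made $<1$ in the limit. This requires the combinatorial extraction (the $\Delta$-system plus order-alignment coming from $\R$-embeddability) to produce a quantitative gap that is bounded away from $0$ independently of $\Y$, which in turn forces the renorming term $\Phi$ to be calibrated against the almost-disjointness of $\A$ from the outset. The delicate balance is that $\Phi$ must be large enough to yield strict convexity and the uniform $\varepsilon$, yet small enough that the renormed norm stays equivalent to the $JL$-norm; reconciling these two demands, and verifying that $\R$-embeddability is genuinely needed (as the abstract promises), is where the real work lies.
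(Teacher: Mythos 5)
There is a genuine gap, and it sits exactly at the step you describe as ``a final elementary estimate.'' Your plan gets the right space (the Johnson--Lindenstrauss space $\X_\A$ over an $\R$-embeddable almost disjoint family, renormed by adding a weighted-$\ell_2$ perturbation) and the right combinatorics ($\Delta$-system extraction plus a refinement using $\R$-embeddability). But your mechanism for pushing distances \emph{below} $1$ does not work. In the supremum norm, the $\Delta$-system/$\R$-embeddability argument does \emph{not} make the differences $y-y'$ ``controlled by the $c_0$-part'' or ``almost orthogonal'': after the extraction the difference is essentially $\sum_i q_i(1_{A_{x_i^\xi}}-1_{A_{x_i^\eta}})$, whose sup norm is about $\max_i|q_i|$, which can be as large as $1$. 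Indeed the characteristic functions $1_A$, $A\in\A$, form a $1$-equilateral set in $(\X_\A,\|\cdot\|_\infty)$, so no subset extraction in that norm can yield $(1-\varepsilon)$-concentration; the best the combinatorics gives is $(1+\varepsilon)$-concentration for every $\varepsilon>0$ (the paper's Proposition \ref{Rembed}). Strict convexity by itself also gives no quantitative closeness of sphere points, so your appeal to it there is a non sequitur.

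The missing idea is the reduction in Proposition \ref{reduction}: renorm by $\|x\|_T=\|x\|_\infty+\|T(x)\|_2$ with $T:\ell_\infty\to\ell_2$ \emph{injective} and with \emph{separable} range. Injectivity forces $\|y\|_\infty<\|y\|_T=1$ for every $y$ on the new sphere; a cofinality argument pins all the values $\|y\|_\infty$ into an interval $(q-(1-q)/4,\,q)$ for a single rational $q<1$. Applying the $(1+\varepsilon)$-concentration of the old sphere to the normalizations $y/\|y\|_\infty$ with $\varepsilon=(1-q)/4q$, and rescaling back (Lemma \ref{spheres}), bounds $\|y-y'\|_\infty$ by $q+(1-q)/2$, strictly below $1$; separability of the range of $T$ then concentrates the $\|T(y)-T(y')\|_2$ terms below $(1-q)/4$, giving $\|y-y'\|_T\le 1-(1-q)/4$. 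Note two further points: the $\varepsilon$ in the theorem is allowed to depend on $\Y$ (it is $(1-q)/4$ with $q$ determined by $\Y$), so the uniformity you flag as the main obstacle is not actually required; and your extra quadratic form $\Phi$ on the quotient coordinates is both unnecessary (injectivity of $T=(f(n)/2^n)_n$ into the strictly convex $\ell_2$ already yields strict convexity of $\|\cdot\|_T$) and harmful, since a perturbation that ``genuinely sees the $\A$-coordinates'' would not have separable range, which is what the concentration of the perturbation terms depends on.
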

 To prove it apply  Propositions \ref{Rembed}
 and \ref{reduction} obtaining the validity
 of their hypotheses by   Lemmas \ref{exists-ad}  and \ref{operator} respectively.
  The strength of the above property\footnote{Another striking property of such spaces
  is that one cannot pack uncountably many pairwise disjoint 
  open balls of radius ${1\over3}$ into the unit ball, as such a packing would yield an
  uncountable $1$-separated subset of the unit sphere
  (cf. \cite{kottman2}). Packing of uncountably many
  balls of radius ${1\over3}-\varepsilon$ into the unit ball is possible in any inseparable Banach space for
  any $\varepsilon>0$ by 
  the Riesz lemma.} may be appreciated by seeing  how easily it yields
  the next two theorems which provide other natural properties of Banach spaces unknown  to occur until now without
  making some additional consistent but unprovable set-theoretic assumptions.

  Recall an observation of Terenzi from \cite{terenzi} that if $\Y$ is an equilateral set
  in a Banach space $\X$, by scaling we may assume that it is a 
  $1$-equilateral set, and then by considering $\{y_0-y: y\in \Y\setminus\{y_0\}\}$ for
  any $y_0\in \Y$ we obtain a 
  $1$-equilateral set  consisting of elements of the unit sphere of $\X$. So Theorem \ref{main}
  yields:
  
  \begin{theorem}\label{main-equi} There is a (strictly convex) Banach space of density 
  continuum which does not admit an uncountable equilateral set.
\end{theorem}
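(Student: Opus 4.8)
The plan is to derive Theorem \ref{main-equi} directly from Theorem \ref{main} together with Terenzi's reduction, which is already recalled in the excerpt. The key observation to exploit is that an uncountable equilateral set and an uncountable concentrated subset are, in a strong sense, incompatible: concentration forces all pairwise distances strictly below $1$, while an equilateral set normalized to live on the sphere has all pairwise distances exactly $1$.

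First I would argue by contradiction. Suppose $\X$ is the space provided by Theorem \ref{main} and that $\Y\subseteq\X$ is an uncountable equilateral set. By Terenzi's observation, after scaling so that $\Y$ is $1$-equilateral and then passing to $\{y_0-y: y\in\Y\setminus\{y_0\}\}$ for a fixed $y_0\in\Y$, we obtain an uncountable $1$-equilateral subset $\Y'$ of the unit sphere of $\X$; note that this transformation preserves cardinality, so $\Y'$ is still uncountable. (The pairwise distances are preserved because $\|(y_0-y)-(y_0-y')\|=\|y'-y\|=1$, and each $y_0-y$ has norm $1$ since $\Y$ is $1$-equilateral.)

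Next I would apply Theorem \ref{main} to $\Y'$. Choose a regular cardinal $\kappa$ realized as the cardinality of an uncountable subset of $\Y'$; indeed, replacing $\Y'$ by a subset of cardinality $\aleph_1$ (which is regular) we may assume $\Y'$ itself has regular cardinality $\kappa\geq\aleph_1$. Theorem \ref{main} then yields a subset $\Z\subseteq\Y'$ of cardinality $\kappa$ which is $(1-\varepsilon)$-concentrated for some $\varepsilon>0$. Since $\kappa$ is uncountable, $\Z$ contains at least two distinct points $z,z'$, and concentration gives $\|z-z'\|\leq 1-\varepsilon<1$. But $\Z\subseteq\Y'$ and $\Y'$ is $1$-equilateral, so $\|z-z'\|=1$, a contradiction.

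The argument is genuinely short, and there is no serious obstacle once Theorem \ref{main} is in hand; the only points requiring care are bookkeeping ones. One must check that the passage to a subset of regular cardinality is legitimate, which is immediate since $\aleph_1$ is regular and any uncountable set has a subset of size $\aleph_1$. One must also verify that Terenzi's translation genuinely lands the set on the unit sphere and preserves both uncountability and the equilateral value, which is the elementary computation indicated above. The essential content is entirely carried by Theorem \ref{main}: the incompatibility of uncountable concentration with equilaterality on the sphere is what forces the nonexistence of uncountable equilateral sets.
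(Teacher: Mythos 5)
Your proposal is correct and is exactly the paper's argument: the paper derives Theorem \ref{main-equi} from Theorem \ref{main} via Terenzi's reduction (scale to a $1$-equilateral set, translate by $y_0$ to land on the unit sphere), and the contradiction with a $(1-\varepsilon)$-concentrated subset is immediate. Your extra bookkeeping (passing to a subset of regular cardinality $\aleph_1$) is a legitimate and harmless way to make the hypothesis of Theorem \ref{main} apply.
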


Banach spaces with this property  were previously consistently constructed by the author
in \cite{equi}. These spaces were of the form $C(K)$ for $K$ compact Hausdorff, while
it was also proved in \cite{equi} that consistently every Banach space of the form $C(K)$ admits
an uncountable equilateral set. 

Different absolute examples of Banach spaces satisfying Theorem \ref{main-equi} are being presented at the same time in a joint paper of the author with H.M. Wark \cite{equi2}. They are renormings of 
$\ell_1([0,1])$ and moreover they do not admit even any infinite equilateral set
(However all equivalent renormings of $\ell_1(\Gamma)$ for $\Gamma$ uncountable admit
 $(1+\varepsilon)$-separated set of size $\Gamma$ by Remark 3.16 of \cite{hajek-tams}).
The above theorem solves Problem 293 of  \cite{guirao}.

As in the case of $(1+)$-separated or $(1+\varepsilon)$-separated sets
the existence of infinite or uncountable equilateral sets (by the above
argument of Terenzi one may assume that such sets are subsets of the unit sphere) was proved by various authors
for many particular classes of Banach spaces. For example, infinite
equilateral sets exist in any Banach space which contains an isomorphic copy of
$c_0$ (\cite{mer-pams}) or any uniformly smooth space (\cite{unif-convex}).
However, in contrast to the case  of $(1+)$-separated sets there are infinite
dimensional Banach spaces with no infinite equilateral subsets (\cite{terenzi, terenzi2, mer-serdica}).
Uncountable equilateral sets in nonseparable Banach spaces have been investigated as well, for
example, in \cite{cuth, equi, mer-pams, mer-ck}. Also the strict convexity
of the norm of our example should be compared with the results saying
that   the unit sphere of  an infinite dimensional uniformly convex
space admits a
 $(1+\varepsilon)$-separated set of the cardinality equal to the density of the space
(\cite{unifconvex}, Proposition 4.16 of \cite{hajek-tams}).

As stressed in \cite{hajek-tams} $(1+)$-separated subsets of the sphere are related to Auerbach bases. 
  Recall
 that for a Banach space $\X$ the system $(x_i, x^*_i)_{i\in \I}\subseteq \X\times \X^*$ is called a biorthogonal system when $x^*_j(x_i)=0$ if $i\not=j$ and $x^*_i(x_i)=1$ for each $i, j\in I$. 
 It is called an Auerbach system if it is biorthogonal and $\|x_i\|=1=\|x^*_i\|$ for every $i\in I$.
 It is clear that the elements of $\X$ in an Auerbach system $(x_i, x^*_i)_{i\in \I}$ form a $1$-separated subset of the unit sphere as
 $\|x_i-x_j\|\geq |x_i(x_i-x_j)|=|x_i^*(x_i)|=1$ for any distinct $i, j\in I$. Thus 
  Theorem \ref{main} yields the following:
  
\begin{theorem} There is a (strictly convex) Banach space of density continuum with no uncountable
Auerbach system.
\end{theorem}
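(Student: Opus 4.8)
The plan is to derive this immediately from Theorem \ref{main}, using the observation recorded just above the statement that the vectors of an Auerbach system are $1$-separated. Let $\X$ be the strictly convex Banach space of density continuum provided by Theorem \ref{main}, and suppose toward a contradiction that $\X$ carries an uncountable Auerbach system $(x_i, x^*_i)_{i\in \I}$.

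First I would extract a witnessing subset of the sphere of regular cardinality. Since $|\I|$ is uncountable, fix any $\I_0\subseteq \I$ with $|\I_0|=\omega_1$; as $\omega_1$ is regular, the set $\Y=\{x_i:i\in \I_0\}$ is an uncountable subset of the unit sphere of $\X$ of regular cardinality $\kappa=\omega_1$. By the computation preceding the theorem, $\|x_i-x_j\|\geq |x^*_i(x_i-x_j)|=1$ for any distinct $i,j\in \I_0$, so $\Y$ is $1$-separated, and this property is inherited by every subset of $\Y$.

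Next I would apply Theorem \ref{main} to $\Y$. It yields a subset $\Z\subseteq \Y$ with $|\Z|=\omega_1$ together with some $\varepsilon>0$ such that $\Z$ is $(1-\varepsilon)$-concentrated, that is $\|y-y'\|\leq 1-\varepsilon$ for all distinct $y,y'\in \Z$. Since $|\Z|=\omega_1\geq 2$, there exist distinct $y,y'\in \Z$, and for these $\|y-y'\|\leq 1-\varepsilon<1$, contradicting the $1$-separation of $\Z\subseteq \Y$. Hence $\X$ admits no uncountable Auerbach system, which is the assertion.

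There is essentially no obstacle beyond Theorem \ref{main} itself: all the difficulty is absorbed into the construction of $\X$. The only point meriting a word of care is the passage to a subset of regular cardinality, which Theorem \ref{main} takes as a hypothesis; this is harmless, since any uncountable index set contains one of size $\omega_1$, and the $(1-\varepsilon)$-concentration of even a two-element subset already contradicts $1$-separation, so the full strength (``a subset of the same cardinality'') of Theorem \ref{main} is not needed here.
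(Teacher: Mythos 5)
Your proof is correct and follows exactly the paper's route: the vectors of an Auerbach system form a $1$-separated subset of the unit sphere (via $\|x_i-x_j\|\geq |x^*_i(x_i-x_j)|=1$), which is incompatible with the $(1-\varepsilon)$-concentrated subset produced by Theorem \ref{main}. Your extra care in passing to a subset of regular cardinality $\omega_1$ is a harmless refinement of the same argument.
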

 
 A Banach space with this property was previously only constructed under the assumption
 of the Continuum Hypothesis {\sf CH} in \cite{hajek-tams} (in fact, this is a renorming of
 $c_0(\omega_1)$ so WLD, the property not shared by our space).
 By a result of Day every separable Banach space admits an infinite  Auerbach system (\cite{day}).
 Constructions of Banach spaces of  density continuum with no fundamental
 Auerbach systems were presented in \cite{godun2, godun, plicko, troyanski}.

 Our Banach space is an equivalent renorming $(\X_\A, \|\ \|_T)$ of the subspace 
 $(\X_\A, \|\ \|_\infty)$ of $\ell_\infty$ which is spanned by
 $c_0$ and characteristic functions of elements of an uncountable  almost disjoint family $\A$
 of infinite subsets of $\N$. Recall that $\A\subseteq \wp(\N)$ is called almost disjoint
 if $A\cap A'$ is finite for any distinct $A, A'\in\A$.
 Such spaces $\X_\A$
 were first considered by Johnson and Lindenstrauss in \cite{jl, jl-cor}.
  The renorming $\|\ \|_T$ is obtained in a standard
 way using a bounded injective operator $T:\ell_\infty\rightarrow \ell_2$ (see Section \ref{renormings}). 
 As is well known such a space $(\X_\A, \|\ \|_\infty)$ is isometric to
 the space of the form $C_0(K_\A)$, 
 where $K_\A$ is locally compact, totally disconnected, scattered, separable
 Hausdorff space known as $\Psi$-space or Mr\'owka-Isbell  space or Alexandroff-Urysohn space
 (see e.g., \cite{hrusak-ad, supermrowka}). In particular, it is $c_0$-saturated (by \cite{pel}),
 so admits infinite equilateral sets by \cite{mer-pams}.
The main point, which actually allows one to
 conclude directly the previous theorems, is to obtain the behaviour
 of separated sets of the unit sphere  as in $c_0(\Gamma)$ and at the same time
 to have the underlying locally compact space separable (which allows the construction an injective
 operator from $C(K_\A)$ into the separable $\ell_2$). This is 
 summarized in the following theorem which is proved by applying
 Lemma \ref{exists-ad} and Proposition \ref{Rembed}:
 
 \begin{theorem} There is a separable locally compact Hausdorff space $K$
 of weight continuum such that for every $\varepsilon>0$ and every   subset $\Y$  of
 the unit sphere of $C_0(K)$ of a regular uncountable cardinality there is a subset
 $\Z\subseteq \Y$  of the same cardinality which is
 $(1+\varepsilon)$-concentrated.
 \end{theorem}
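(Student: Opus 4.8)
The plan is to take for $K$ the Mr\'owka--Isbell $\Psi$-space $K_\A$ associated with an $\R$-embeddable almost disjoint family $\A\subseteq\wp(\N)$ of cardinality $\cc$, whose existence is exactly the content of Lemma \ref{exists-ad}. Recall from the discussion above that $(\X_\A,\|\ \|_\infty)$ is isometric to $C_0(K_\A)$, so it suffices to produce $\A$ and then to verify (i) the topological requirements on $K_\A$ and (ii) the concentration property of the unit sphere of $C_0(K_\A)$.

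For (i), the three properties are immediate from the standard structure of $K_\A=\N\cup\A$, in which the points of $\N$ are isolated and a neighbourhood base at $A\in\A$ is given by the sets $\{A\}\cup(A\setminus F)$ for finite $F\subseteq\N$. Local compactness and the Hausdorff property are clear ($\{A\}\cup A$ is a compact neighbourhood of $A$, homeomorphic to a convergent sequence together with its limit); the set $\N$ of isolated points is dense, because every basic neighbourhood of a point of $\A$ meets $\N$ in an infinite set, so $K_\A$ is separable; and the weight is exactly $|\A|+\aleph_0=\cc$, since the points of $\A$ form a closed discrete set and each of them requires its own neighbourhoods.

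The real content is (ii), which I would obtain by invoking Proposition \ref{Rembed} for the family $\A$: $\R$-embeddability is precisely the hypothesis under which $C_0(K_\A)$ inherits the $c_0(\Gamma)$-like behaviour of its sphere, so that every subset $\Y$ of the unit sphere of regular uncountable cardinality $\kappa$ contains a subset $\Z\subseteq\Y$ of cardinality $\kappa$ that is $(1+\varepsilon)$-concentrated for the prescribed $\varepsilon>0$. At the level of the proof of that proposition, the mechanism I would use runs as follows. Since each $f\in C_0(K_\A)$ vanishes at infinity, for every $\delta>0$ the set $\{x:|f(x)|\geq\delta\}$ is compact and hence meets the closed discrete set $\A$ in a finite set; thus each unit vector $f_\xi$ has only finitely many ``large'' almost disjoint coordinates $G_\xi=\{A\in\A:|f_\xi(A)|>\varepsilon/2\}$, and its large values on $\N$ live, up to a finite set, on $\bigcup G_\xi$. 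A $\Delta$-system argument (available since $\kappa$ is regular uncountable) refines $\Y$ to a $\kappa$-sized family whose sets $G_\xi$ form a sunflower with common root $R$, and a finite iteration of the pigeonhole principle on the values $f_\xi(A)$, $A\in R$, aligns the root contributions to within $\varepsilon/4$. The delicate point, and the one where $\R$-embeddability is essential, is to control the disjointified ``petals'' $G_\xi\setminus R$: although these petal sets are pairwise almost disjoint, they still meet in finite sets, and one must rule out the sign conflicts at those overlap points that would push two of the functions more than $1+\varepsilon$ apart. The witnessing map $\varphi\colon\N\to\R$ (under which each $A\in\A$ is $\varphi$-convergent to a distinct real $r_A$) is what organises the isolated coordinates along the line and separates the tails accumulating at the distinct limits, reducing this last step to an Elton--Odell-type estimate carried out in an essentially second-countable situation. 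This is where I expect the main difficulty to lie, and it is precisely what is discharged by the proof of Proposition \ref{Rembed}.
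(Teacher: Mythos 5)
Your proposal is correct and follows the paper's own route exactly: take $K=K_\A$ for an $\R$-embeddable almost disjoint family of cardinality continuum (Lemma \ref{exists-ad}), note the standard topological properties of the Mr\'owka--Isbell space and the isometry $C_0(K_\A)\cong(\X_\A,\|\ \|_\infty)$, and invoke Proposition \ref{Rembed} for the concentration property. Your optional sketch of the mechanism behind Proposition \ref{Rembed} differs in presentation from the paper's argument (which approximates each unit vector by a finite combination $g+\sum q_i 1_{A_{x_i}}$ and then applies a $\Delta$-system plus Lemma \ref{ad}), but since you are citing that proposition rather than reproving it, this does not affect the correctness of the proof.
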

 
 To obtain the above  property of the space $(\X_\A, \|\ \|_\infty)$ and
 consequently the properties of its renorming $(\X_\A, \|\ \|_T)$ we need 
 some special property of the almost disjoint family $\A$. A known property
 that is sufficient for us is the $\R$-embeddability of $\A$ (Definition \ref{def-Rembed}).
 On the other hand we show that certain almost disjoint families which are not $\R$-embeddable,  known as
 Luzin families (Definition \ref{def-luzin}), induce the
  Banach space $\X_\A$  such that the sphere of $(\X_\A, \|\ \|_\infty)$ admits 
 an uncountable $2$-equilateral subset (Proposition \ref{luzin-2}) 
 and the sphere of $(\X_\A, \|\ \|_T)$ admits 
 an uncountable $(1+ {1\over 2})$-separated subset (Proposition \ref{luzin-renorm}). For more
 on $\R$-embeddability see \cite{akemann}. Note that A. Dow showed in \cite{dow} that assuming the proper forcing axiom {\sf PFA} 
 every maximal almost disjoint family contains a Luzin subfamily. However
 $\R$-embeddable families  
 of cardinality continuum are abundant and elementary
   to construct with no additional set-theoretic assumptions (like Luzin families of cardinality $\omega_1$).

 Finally let us comment on the isomorphic theory structure of our spaces. 
 $\X_\A$ in the norm $\|\  \|_\infty$ admits a  $1$-equilateral set of cardinality continuum 
 which is an Auerbach system (just characteristic functions
 of the elements of the almost disjoint family $\A$). Although there are many
 nonisomorphic Banach spaces of the form $\X_\A$ for an almost disjoint families 
 $\A$ (\cite{sailing, mar-pol})
 under Martin's Axiom  {\sf MA} and the negation of {\sf CH} all spaces $\X_\A$ are
 pairwise isomorphic for $\A$s of cardinality $\omega_1$, in particular
 $\X_\A$ can be  isomorphic to $\X_{\A'}$ with $\A$ being $\R$-embeddable and
 $\A'$ being Luzin (\cite{sailing}). This, for example,  provides equivalent renormings 
 of $(\X_A, \|\ \|_\infty)$ for any Luzin family $\A$  which satisfy Theorem \ref{main}.
 In fact, at least consistently,
 every nonseparable Banach space can be renormed so that the new unit sphere
 admits uncountable $2$-equilateral sets
  (Theorem 3 \cite{mer-pams}).
 It is unknown at the present moment if this can be proved in ZFC alone.

 \section{Preliminaries and terminology}\label{preliminaries}
 
 \subsection{Notation}\label{notation} 
 
 The notation attempts to be standard. In particular  $C(K)$ denotes the set
 of all continuous functions on a compact Hausdorff space $K$ and
 $C_0(K)$ denotes all continuous functions $f$ on a locally compact 
 Hausdorff $K$ such that for each $\varepsilon>0$
 there is a compact $L\subset K$ such that $|f(x)|<\varepsilon$
 for all $x\in K\setminus L$. Both of these types of linear spaces are considered as Banach
 spaces with the supremum norm which is denoted by $\|\ \|_\infty$. The notation $1_A$ denotes the characteristic function 
 of a set $A$.  When $\Y$ is a subset of a Banach space,
 $\overline{span}(\Y)$ denotes the norm closure of the linear span of $\Y$. The density
 of an infinite dimensional Banach space is the minimal cardinality of a norm dense subset
 of the space. By $\omega_1$ we mean
 the first uncountable cardinal. All almost disjoint families 
 of subsets of $\N$ considered in this paper are uncountable and consist of infinite sets.
 A cardinal $\kappa$ is said to be of uncountable cofinality if the union of countably many sets
 of cardinalities smaller than $\kappa$ has cardinality smaller than $\kappa$. If $A$
 is a set, then $[A]^2$ denotes the family of all two-element subsets of $A$.
The cardinals like $\omega_1$ or the continuum are of uncountable cofinality (\cite{jech}).
 If $(x_n)_{n\in \N}$ is  a sequence of reals and $A\subseteq \N$ is infinite by  $\lim_{n\in A}x_n$ we mean 
 $\lim_{k\rightarrow\infty}x_{n_k}$, where $({n_k})_{k\in \N}$ is any (bijective) renumeration of
 $A$.
 
 \subsection{Banach spaces $\X_\A$}

Given an infinite almost disjoint family $\A$ of infinite subsets of $\N$ we consider the subspace
of $(\ell_\infty, \|\ \|_\infty)$ defined by
$$\X_\A=\overline{span}(\{1_A: A\in \A\}\cup  c_0).$$
Note that  for a fixed $A_0\in\A$ the set
$$\ell_\infty^{A_0}=\{f\in \ell_\infty: \lim_{n\in A_0}f(n) \ \hbox{exists}\}$$
is a closed (in the $\|\ \|_\infty$-norm) linear subspace of $\ell_\infty$. As for
every $A_0\in\A$ all the generators $\{1_A: A\in \A\}\cup  c_0$ of $\X_\A$ are
in this space, it follows that the entire $\X_\A$ is contained in
every $\ell_\infty^{A}$ for $A\in \A$.

\begin{lemma}\label{functions-ad} Suppose that $\A$ is an almost disjoint family of
infinite subsets of $\N$. Let  $k\in \N$, $g\in c_0$,  $A_1, \dots, A_k\in \A$ be distinct,
$q_1, \dots q_k\in \R$ and
$$f=g+\sum_{1\leq i\leq k} q_i 1_{A_{i}}.$$
Then for every $1\leq j \leq k$ we have $\lim_{n\in A_j}f(n)=q_j$.
\end{lemma}
\begin{proof} We have $\lim_{n\in A_j}g(n)=0$ as $g\in c_0$ and
$\lim_{n\in A_j}1_{A_i}(n)=0$ for $i\not=j$ since $A_i\cap A_j$ is finite.
\end{proof}

 \subsection{Equivalent renormings induced by bounded operators}\label{renormings}
 
 Recall that two norms $\|\ \|$ and $\|\ \|'$ on a Banach space $\X$ are equivalent if
 the identity operator between $(\X, \|\ \|)$ and $(\X, \|\  \|')$ is an isomorphism, that is, when
 there are positive constants $c, C$ such that
 $c\| x\|\leq \|x \|'\leq C\| x\|$ for every $x\in \X$. For more on renormings
 see \cite{dgz}.
 It is clear that if  $\X$ and $\Y$ are Banach spaces with
 norms $\|\ \|_\X$ and $\|\ \|_\Y$ respectively and  $T:\X\rightarrow \Y$ is
 a bounded linear operator. Then 
 $$\|x\|_T=\|x \|_\X+\|T(x)\|_\Y$$
 is a norm on $\X$ which is equivalent to the norm $\|\ \|_\X$. In this paper
 besides the supremum norm  $\|\ \|_\infty$ we will consider norms of the   form $\|\ \|_T$.
 Recall that a norm $\| \ \|$ on a Banach space $\X$ is called strictly convex if
 $\|x+y\|=\|x\|+\|y\|$ for $x, y\in \X\setminus\{0\}$
 implies that $x=\lambda y$ for some $\lambda> 0$.
 An example of a strictly convex norm is the standard $\| \ \|_2$ norm
 on $\ell_2$.
 
 \begin{lemma}\label{sconvex} Suppose that $\X$ and $\Y$ are Banach spaces.
 If $T: \X\rightarrow \Y$ is an injective  bounded linear operator, 
 $(\Y, \|\ \|_\Y)$ is strictly
 convex, then $(\X, \|\ \|_T)$ is strictly convex.
 \end{lemma}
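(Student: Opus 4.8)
The plan is to unwind the definition of the new norm and reduce strict convexity of $(\X, \|\ \|_T)$ to strict convexity of $(\Y, \|\ \|_\Y)$, transported back through the injective operator $T$. First I would fix $x, y \in \X \setminus \{0\}$ and suppose they witness a failure of strict convexity for $\|\ \|_T$, i.e.\ $\|x+y\|_T = \|x\|_T + \|y\|_T$. Recalling that $\|z\|_T = \|z\|_\X + \|Tz\|_\Y$ and using linearity of $T$, this hypothesis expands to
$$\|x+y\|_\X + \|Tx+Ty\|_\Y = \big(\|x\|_\X + \|y\|_\X\big) + \big(\|Tx\|_\Y + \|Ty\|_\Y\big).$$

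The key observation is that the two summands on the left are each dominated by the corresponding parenthesized sum on the right, by the triangle inequality in $\X$ and in $\Y$ respectively. Since the total on the left equals the total on the right, and each ``deficit'' $\|x\|_\X + \|y\|_\X - \|x+y\|_\X$ and $\|Tx\|_\Y + \|Ty\|_\Y - \|Tx+Ty\|_\Y$ is nonnegative, both deficits must vanish; in particular I obtain equality in the triangle inequality on the $\Y$ side:
$$\|Tx+Ty\|_\Y = \|Tx\|_\Y + \|Ty\|_\Y.$$
This splitting step, trivial as it is, is really the only place requiring a moment's care, and it is the crux of the argument; note that the $\X$-component of $\|\ \|_T$ plays no role beyond contributing its (harmless) nonnegative deficit.

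Next I would use that $T$ is injective: since $x \neq 0$ and $y \neq 0$, the images $Tx$ and $Ty$ are nonzero elements of $\Y$. Now strict convexity of $(\Y, \|\ \|_\Y)$ applies directly to the pair $Tx, Ty$ and yields a scalar $\lambda > 0$ with $Tx = \lambda\, Ty$. Finally, by linearity $T(x - \lambda y) = Tx - \lambda\, Ty = 0$, and injectivity of $T$ forces $x = \lambda y$ with $\lambda > 0$, which is exactly the conclusion required for strict convexity of $(\X, \|\ \|_T)$. I do not anticipate any genuine obstacle here: injectivity converts the scalar multiple relation in $\Y$ back to one in $\X$ without loss, and the only subtlety is remembering to verify $Tx, Ty \neq 0$ before invoking strict convexity of $\Y$.
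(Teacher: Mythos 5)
Your proof is correct and follows essentially the same route as the paper: split the equality into the two triangle-inequality deficits, conclude equality in $\Y$, apply strict convexity of $\Y$ to $Tx, Ty$, and pull the relation $Tx=\lambda Ty$ back through injectivity. Your explicit remark that $Tx, Ty\neq 0$ must be checked before invoking strict convexity of $\Y$ is a small point the paper leaves implicit, but there is no substantive difference.
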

 \begin{proof} For non-zero $x, y\in \X$, by the triangle inequality, the condition
 $$\|x+y\|_\X+\|T(x)+T(y)\|_\Y=\|x\|_\X+\|T(x)\|_\Y+\|y\|_\X+\|T(y)\|_\Y$$
  implies that
 $\|x+y\|_\X=\|x\|_\X+\|y\|_\X$ and
 $\|T(x)+T(y)\|_\Y=\|T(x)\|_\Y+\|T(y)\|_\Y$. The latter
 implies  $T(x)=\lambda T(y)$ for some $\lambda> 0$, which gives 
 $x=\lambda y$ since $T$ is injective.
 \end{proof}

\begin{lemma}\label{operator}
There is an injective bounded linear operator $T:\ell_\infty\rightarrow\ell_2$.
In particular, the equivalent renorming $(\ell_\infty, \|\ \|_T)$ is strictly convex.
\end{lemma}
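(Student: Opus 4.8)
The plan is to write down one explicit operator built from the coordinate functionals of $\ell_\infty$, which already separate points. Fix a sequence of strictly positive weights $(w_n)_{n\in\N}$ with $\sum_{n\in\N}w_n^2<\infty$, say $w_n=2^{-n}$, and define
$$T:\ell_\infty\rightarrow\ell_2, \qquad T(x)=(w_n\, x(n))_{n\in\N}.$$
First I would check that $T$ really lands in $\ell_2$ and is bounded: for every $x\in\ell_\infty$ we have $\|T(x)\|_2^2=\sum_{n\in\N}w_n^2|x(n)|^2\le\|x\|_\infty^2\sum_{n\in\N}w_n^2$, so $T$ is a well-defined bounded linear operator with $\|T\|\le(\sum_{n\in\N}w_n^2)^{1/2}<\infty$.

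Next I would verify injectivity, which is the whole reason for weighting by \emph{strictly} positive numbers. If $T(x)=0$, then $w_n\,x(n)=0$ for every $n$, and since each $w_n>0$ this forces $x(n)=0$ for all $n$, that is $x=0$. Hence $T$ is injective, establishing the first assertion.

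The second assertion is then immediate. The space $\ell_2$ with its usual norm is strictly convex (as recalled just before the statement), so applying Lemma \ref{sconvex} to the injective bounded operator $T:\ell_\infty\rightarrow\ell_2$ shows that $(\ell_\infty, \|\ \|_T)$ is strictly convex.

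I do not anticipate any genuine obstacle here; the content is conceptual rather than technical. Although $\ell_\infty$ is nonseparable while $\ell_2$ is separable, an injective bounded operator between them exists precisely because the countably many coordinate functionals $x\mapsto x(n)$ already separate the points of $\ell_\infty$. Damping them by a square-summable sequence of positive weights is exactly what is needed to land in $\ell_2$ while preserving injectivity, and this is the only point requiring (minimal) care.
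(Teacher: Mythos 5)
Your proof is correct and is essentially the paper's own argument: the paper uses exactly the operator $T(f)=\bigl(f(n)/2^n\bigr)_{n\in\N}$, verifies boundedness by the same square-summability estimate, and gets strict convexity from Lemma \ref{sconvex}. No issues.
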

\begin{proof} Consider $T:\ell_\infty \rightarrow\ell_2$ given by
$$T(f)=\Big({f(n)\over 2^n}\Big)_{n\in \N}.$$
$\|f\|_\infty\leq \|f\|_\infty+\|T(f)\|_2=
 \|f\|_\infty+\sqrt{\sum_{n\in\N}{f(n)^2\over 2^n}}\leq(1+\sqrt2)\| f\|_\infty$.
\end{proof}

 \begin{lemma}\label{separable} 
 Suppose that $\kappa$ is a cardinal of uncountable cofinality and $\Y$  
is a subset of cardinality $\kappa$
 of a separable Banach space $\X$. Then for every $\varepsilon>0$ there is $\Y'\subseteq \Y$
 of cardinality $\kappa$ such that $\|y-y'\|\leq\varepsilon$ for every $y, y'\in \Y'$. 
 \end{lemma}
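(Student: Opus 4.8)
The plan is to prove this by a straightforward covering-and-pigeonhole argument that plays the separability of $\X$ against the uncountable cofinality of $\kappa$.

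First I would fix a countable dense subset $D=\{d_n:n\in\N\}$ of $\X$ and cover $\X$ by the countably many open balls $B(d_n,\varepsilon/2)$, $n\in\N$. Since $D$ is dense, every point of $\X$ lies in at least one such ball, so in particular
$$\Y=\bigcup_{n\in\N}\big(\Y\cap B(d_n,\varepsilon/2)\big).$$

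Next I would invoke the hypothesis on $\kappa$. By the definition of uncountable cofinality recalled in Section \ref{notation}, a union of countably many sets each of cardinality smaller than $\kappa$ has cardinality smaller than $\kappa$. Since $|\Y|=\kappa$, the pieces $\Y\cap B(d_n,\varepsilon/2)$ cannot all have cardinality $<\kappa$; hence there is some $n_0\in\N$ with $|\Y\cap B(d_{n_0},\varepsilon/2)|=\kappa$. I set $\Y'=\Y\cap B(d_{n_0},\varepsilon/2)$. Finally, for any $y,y'\in\Y'$ the triangle inequality yields
$$\|y-y'\|\leq \|y-d_{n_0}\|+\|d_{n_0}-y'\|< \varepsilon/2+\varepsilon/2=\varepsilon,$$
so $\Y'$ has the required properties.

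There is essentially no obstacle here: the only point to be careful about is that separability is exactly what supplies a \emph{countable} cover by balls of small diameter, while the uncountable cofinality of $\kappa$ is exactly what guarantees that one member of such a countable cover still meets $\Y$ in a set of full cardinality $\kappa$. Everything else is a routine application of the triangle inequality.
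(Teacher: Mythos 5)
Your proof is correct and follows essentially the same argument as the paper: cover $\X$ by countably many balls of diameter $\varepsilon$ centered at a dense sequence, and use the uncountable cofinality of $\kappa$ to find one ball meeting $\Y$ in a set of cardinality $\kappa$. The only difference is that you spell out the triangle-inequality step explicitly, which the paper leaves implicit.
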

 \begin{proof} Let $\{x_n: n\in \N\}$ be norm dense countable subset of $\X$. Balls of diameter
 $\varepsilon$ with the centers  $x_n$s cover $\X$ and so cover $\Y$. By the uncountable
 cofinality of $\kappa$ one of the balls must contain $\kappa$ elements of $\Y$.
 \end{proof}

 \begin{lemma}\label{spheres} Suppose that $0<a\leq\|x\|\leq \|x'\|\leq b<1$ for some $a, b\in \R$ and $x, x'$
 in a Banach space $\X$. Then
 $$\|x-x'\|\leq b\Bigg\|{x\over{\|x\|}}-{x'\over{\|x'\|}}\Bigg\|+(b-a)$$
 \end{lemma}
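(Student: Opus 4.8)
The plan is to relate $x$ and $x'$ to their normalizations $x/\|x\|$ and $x'/\|x'\|$ on the unit sphere through a single telescoping identity, and then read off the bound from the triangle inequality together with the hypotheses on the norms. The algebraic heart of the argument is the identity
\[
x-x'=\|x\|\left(\frac{x}{\|x\|}-\frac{x'}{\|x'\|}\right)+\big(\|x\|-\|x'\|\big)\frac{x'}{\|x'\|},
\]
which holds in any normed space: expanding the right-hand side, the two copies of $\|x\|\,x'/\|x'\|$ cancel and one is left with $x-x'$. I would first record this verification.

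Next I would apply the triangle inequality to the displayed identity, obtaining
\[
\|x-x'\|\le \|x\|\left\|\frac{x}{\|x\|}-\frac{x'}{\|x'\|}\right\|+\big|\|x\|-\|x'\|\big|\left\|\frac{x'}{\|x'\|}\right\|.
\]
Here $\big\|x'/\|x'\|\big\|=1$, and the hypothesis $0<a\le\|x\|\le\|x'\|\le b<1$ supplies two estimates: $\|x\|\le b$ for the first coefficient, and $\big|\|x\|-\|x'\|\big|=\|x'\|-\|x\|\le b-a$ for the second, since $\|x'\|\le b$ and $\|x\|\ge a$. Substituting these yields exactly
\[
\|x-x'\|\le b\left\|\frac{x}{\|x\|}-\frac{x'}{\|x'\|}\right\|+(b-a),
\]
as claimed.

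The only delicate point is the choice of which norm to factor out in the telescoping identity: factoring out $\|x\|$ (one could equally factor out $\|x'\|$, with the roles of $x$ and $x'$ interchanged in the residual term) makes the coefficient of the sphere-distance at most $b$, while the residual is governed solely by the spread $\|x'\|-\|x\|\le b-a$ of the admissible norms. I do not anticipate any substantive obstacle, as this is essentially an \emph{annulus} estimate; note in particular that the hypothesis $b<1$ plays no role in the inequality itself and will matter only when the lemma is later invoked to convert concentration on the sphere into concentration of the ambient vectors.
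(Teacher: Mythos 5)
Your proof is correct and is essentially identical to the paper's: the paper also splits $x-x'$ as $\bigl(x-(\|x\|/\|x'\|)x'\bigr)+\bigl((\|x\|/\|x'\|)x'-x'\bigr)$, which is exactly your telescoping identity, and bounds the two terms by $b\bigl\|x/\|x\|-x'/\|x'\|\bigr\|$ and $(1-a/b)b=b-a$ respectively. Nothing further is needed.
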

 \begin{proof}
 $$\|x-x'\|\leq \|x-(\|x\|/\|x'\|)x'\|+\|(\|x\|/\|x'\|)x'-x'\|\leq$$
 $$\leq \|x\|\Bigg\|{x\over{\|x\|}}-{x'\over{\|x'\|}}\Bigg\| 
 +\Bigg|{\|x\|\over{\|x'\|}} -1 \Bigg|\|x'\|\leq$$
  $$\leq b\Bigg\|{x\over{\|x\|}}-{x'\over{\|x'\|}}\Bigg\|+(1-a/b)b$$
 \end{proof}
 
 The following reduction result  allows us to infer the main properties 
 of the final space $(\X_\A, \|\ \|_T)$ as stated in Theorem \ref{main} from
 the properties of the space $(\X_\A, \|\ \|_\infty)$ which are proved in Proposition \ref{Rembed}.
 
 \begin{proposition}\label{reduction}
 Suppose that $\mathcal X$ is a nonseparable Banach space and $\kappa$ is a
 cardinal  of uncountable cofinality
 such that for every subset $\Y$ of the unit sphere of 
 $(\X, \|\ \|_\X)$ of cardinality $\kappa$  and every $\varepsilon>0$
 there is $\Y'\subseteq \Y$ of cardinality $\kappa$ 
  which is $(1+\varepsilon)$-concentrated. If $T: \X\rightarrow \Z$ is a bounded linear injective operator
  into a Banach space $\Z$ with separable range, then $(\X, \|\ \|_T)$ has the following property:
  For every subset $\Y$ of  the unit sphere of $(\X, \|\ \|_T)$ of cardinality $\kappa$  
 there is $\delta>0$  and $\Y'\subseteq \Y$ of cardinality $\kappa$  
  which is $(1-\delta)$-concentrated.
 \end{proposition}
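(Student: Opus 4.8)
The plan is to exploit the additivity $\|y\|_T=\|y\|_\X+\|T(y)\|_\Z$ on the unit sphere of $(\X,\|\ \|_T)$, controlling the $\Z$-summand through the separability of the range together with Lemma \ref{separable}, and the $\X$-summand through the hypothesis applied to normalized points, after first forcing the $\X$-norms of our vectors to stay uniformly below $1$. So fix a subset $\Y$ of the unit sphere of $(\X,\|\ \|_T)$ with $|\Y|=\kappa$. For each $y\in\Y$ we have $\|y\|_\X+\|T(y)\|_\Z=1$; since $y\neq 0$ and $T$ is injective this forces $0<\|y\|_\X<1$, so $y/\|y\|_\X$ is a well-defined point of the unit sphere of $(\X,\|\ \|_\X)$.

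First I would cap the $\X$-norms. Writing $\Y^{(m)}=\{y\in\Y:\|y\|_\X\le 1-1/m\}$ we have $\Y=\bigcup_{m\in\N}\Y^{(m)}$, because every $\|y\|_\X<1$. As $\kappa$ has uncountable cofinality, some $\Y_1:=\Y^{(m_0)}$ has cardinality $\kappa$; set $b=1-1/m_0<1$, so $\|y\|_\X\le b$ for all $y\in\Y_1$. Next, since $T$ is injective with separable range, it maps $\Y_1$ bijectively onto a subset of the separable space $\overline{T(\X)}$ of cardinality $\kappa$, so Lemma \ref{separable} applied with a parameter $\varepsilon'>0$ (to be fixed below) yields $\Y_2\subseteq\Y_1$ of cardinality $\kappa$ with $\|T(y)-T(y')\|_\Z\le\varepsilon'$ for all $y,y'\in\Y_2$. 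Finally, the map $y\mapsto y/\|y\|_\X$ is injective on $\Y_2$ (if $y/\|y\|_\X=y'/\|y'\|_\X$ then $y=\lambda y'$ with $\lambda>0$, whence $\lambda=\|y\|_T/\|y'\|_T=1$), so it carries $\Y_2$ onto $\kappa$ points of the unit sphere of $(\X,\|\ \|_\X)$; the hypothesis, applied with a parameter $\varepsilon_0>0$, then gives $\Y_3\subseteq\Y_2$ of cardinality $\kappa$ whose normalized points are $(1+\varepsilon_0)$-concentrated, that is $\|y/\|y\|_\X-y'/\|y'\|_\X\|_\X\le 1+\varepsilon_0$ for $y,y'\in\Y_3$.

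It remains to estimate. For $y,y'\in\Y_3$ put $t=\|y\|_\X\le t'=\|y'\|_\X\le b$. Splitting off the difference of the norms (this is the content of Lemma \ref{spheres}, but here one can compute directly) gives
$$\|y-y'\|_\X\le t\Big\|\tfrac{y}{\|y\|_\X}-\tfrac{y'}{\|y'\|_\X}\Big\|_\X+(t'-t)\le t(1+\varepsilon_0)+(t'-t)=t'+t\varepsilon_0\le b(1+\varepsilon_0),$$
and hence $\|y-y'\|_T=\|y-y'\|_\X+\|T(y)-T(y')\|_\Z\le b(1+\varepsilon_0)+\varepsilon'$. Choosing $\varepsilon_0$ and $\varepsilon'$ at the outset so small that $b\varepsilon_0+\varepsilon'\le (1-b)/2$ — for instance $\varepsilon_0=\varepsilon'=(1-b)/4$, which is legitimate since $b$ was already fixed — we obtain $\|y-y'\|_T\le b+(1-b)/2=1-\delta$ with $\delta=(1-b)/2>0$. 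Thus $\Y'=\Y_3$ is $(1-\delta)$-concentrated of cardinality $\kappa$, as required.

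The step I expect to be the genuine obstacle is the very first reduction, pushing $\|y\|_\X$ uniformly below $1$: without a fixed $b<1$ the factor $1+\varepsilon_0$ coming from the hypothesis could carry the $\X$-distance back up to (or past) $1$, and the positive margin $\delta$ would evaporate. It is precisely the uncountable cofinality of $\kappa$ that allows the passage to a subfamily on which the $\X$-norms are capped by some $b<1$, converting the gap $1-b$ into the concentration margin. The remaining care is bookkeeping: each of the three passages to a subfamily must preserve cardinality $\kappa$ — guaranteed by uncountable cofinality for the covering argument, by injectivity of $T$ for the $\Z$-step, and by injectivity of the normalization on the $\|\ \|_T$-sphere for the last step — and the parameters $\varepsilon_0,\varepsilon'$ must be fixed in advance in terms of $b$.
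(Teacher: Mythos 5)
Your proof is correct and follows essentially the same route as the paper's: decompose $\|y-y'\|_T=\|y-y'\|_\X+\|T(y)-T(y')\|_\Z$, use uncountable cofinality to cap $\|y\|_\X$ by some $b<1$, control the $\Z$-summand via Lemma \ref{separable} and the $\X$-summand via the hypothesis applied to the normalized vectors together with the norm-splitting estimate of Lemma \ref{spheres}. The only (harmless) variations are bookkeeping ones: you keep the sharper form $t\bigl\|y/\|y\|_\X-y'/\|y'\|_\X\bigr\|_\X+(t'-t)\le t'+t\varepsilon_0$, which lets you dispense with the lower bound $a$ on $\|y\|_\X$ that the paper secures by trapping the norms in a short rational interval.
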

 \begin{proof} Let $\Y$  be a subset of the unit sphere of $(\X, \|\ \|_T)$  of cardinality
 $\kappa$. As $T$ is injective we have $0<\|y\|_\X<\|y\|_T=1$ for every $y\in \Y$.
 The interval $(0,1)$ can be covered by intervals of the form $(q-(1-q)/4, q)$ for
 rationals $q$ satisfying $0<q<1$. As $\kappa$ has uncountable cofinality by passing to a subset of cardinality $\kappa$ we may assume that there is a rational $0<q<1$ such that
 $$q-(1-q)/4<\|y\|_\X< q$$
 for every $y\in \Y$. Now apply the property of $\X$ with the norm $\|\ \|_\X$
 to $\{y/\|y\|_\X: y\in \Y\}$ and $\varepsilon=(1-q)/4q$ obtaining $\X'\subseteq \{y/\|y\|_\X: y\in \Y\}$ 
 of cardinality $\kappa$ such that
 $\X'$ is $(1+\varepsilon)$-concentrated in the $\|\ \|_\X$ norm. 
 By Lemma \ref{spheres} for $a=q-(1-q)/4\leq\|y\|_\X\leq\|y'\|_\X\leq q=b$ we obtain that 
 $$\|y'-y''\|_\X\leq q(1+(1-q)/4q)+(1-q)/4=$$
 $$= (q+(1-q)/4)+(1-q)/4=q+(1-q)/2 $$
 for every $y', y''\in\Y'=\{y\in \Y: y/\|y\|_\X\in\X'\}$.
 Again using the countable cofinality
 of $\kappa$ and Lemma \ref{separable} we find $\Y''\subseteq \Y'$ of cardinality $\kappa$ such that 
 $\|T(y)-T(y')\|_\Z\leq (1-q)/4$ for all $y, y'\in \Y''$. So
 $$\|y-y'\|_T=\|y-y'\|_\X+\|T(y)-T(y')\|_\Z\leq q+3(1-q)/4=1-(1-q)/4$$
 obtaining that  $\Y''$ is $(1-\delta)$-concentrated for $\delta=(1-q)/4$ as required.
 \end{proof}

\section{Concentration in  spheres of Banach spaces $(\X_\A, \|\ \|_\infty)$ 
induced by $\R$-embeddable almost disjoint families}\label{ad}

\begin{definition}\label{def-Rembed} An almost disjoint family $\A$ of 
subsets of $\N$ is called $\R$-embeddable if there is a function
$\phi:\N\rightarrow\R$ such that the sets $\phi[A]$ for $A\in \A$ are ranges of sequences
converging to distinct reals. 
\end{definition}

So $\R$-embeddable families of cardinality continuum
are examples of perhaps most standard
almost disjoint families.

\begin{lemma}[Folklore]\label{exists-ad} There exist almost disjoint families of infinite subsets of $\N$
of cardinality continuum which are $\R$-embeddable.
\end{lemma}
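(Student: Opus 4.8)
The plan is to construct an explicit $\R$-embeddable almost disjoint family of size continuum directly from the definition, using a standard encoding of branches of a tree by reals. The cleanest realization is to identify $\N$ with the set $\Q$ of rationals (via a fixed bijection) and to let the witnessing function $\phi$ be the identity embedding of $\Q$ into $\R$. Then, for each irrational $r\in\R$, I would choose a sequence of rationals converging monotonically to $r$ and let $A_r\subseteq\Q\cong\N$ be the range of that sequence. The family $\A=\{A_r: r\in\R\setminus\Q\}$ has cardinality continuum, and by construction $\phi[A_r]=A_r$ is the range of a sequence converging to $r$, with distinct $r$ giving distinct limits. This is exactly the shape required by Definition \ref{def-Rembed}.

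The one point that needs checking is almost disjointness. First I would verify that each $A_r$ is infinite: a convergent sequence of rationals approaching an irrational cannot be eventually constant, so its range is infinite. Next I would verify that $A_r\cap A_{r'}$ is finite for distinct irrationals $r\neq r'$. This follows because the two sequences converge to different limits: if $|r-r'|=2\eta>0$, then all but finitely many terms of the sequence defining $A_r$ lie within $\eta$ of $r$, and all but finitely many terms of the sequence defining $A_{r'}$ lie within $\eta$ of $r'$, and these two $\eta$-neighborhoods are disjoint. Hence only finitely many rationals can belong to both ranges, giving $|A_r\cap A_{r'}|<\infty$.

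I do not expect any serious obstacle here, since this is the folklore construction the lemma alludes to. The only mild subtlety is bookkeeping about whether one should use the ranges as subsets of $\Q$ or transport everything through the fixed bijection $\Q\cong\N$ to speak of subsets of $\N$; either way the almost disjointness and the $\R$-embeddability are preserved because a bijection does not change finiteness of intersections nor the limiting behavior that $\phi$ records. An alternative and equally routine approach, avoiding the bijection, is to work with the complete binary tree $2^{<\N}$, enumerate its nodes as $\N$, and assign to each branch $x\in 2^{\N}$ the set of nodes along that branch; two distinct branches split at some finite level and thereafter never share a node, so the branch sets are almost disjoint, there are continuum many of them, and embedding the nodes into $\R$ by a suitable order-respecting map (so that each branch set maps to a sequence converging to the real coded by the branch) exhibits $\R$-embeddability. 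I would present whichever of these is shortest to write cleanly, most likely the rationals-converging-to-irrationals version, as it makes the witnessing $\phi$ essentially transparent.
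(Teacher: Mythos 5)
Your construction is correct and is precisely the folklore argument the paper alludes to (the lemma is stated without proof, and the remark following it, via the identification $\N\cong\Q$ and sequences of rationals converging to distinct irrationals, indicates exactly this route). Your verifications that each $A_r$ is infinite and that distinct limits force finite intersections are both sound, so nothing is missing.
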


\begin{remark} In fact,
an almost disjoint family $\A$ of infinite subsets of $\N$ is $\R$-embeddable
if and only if there is  an injective $\phi:\N\rightarrow \Q$ 
such that the sets $\phi[A]$ for $A\in A$ are ranges of sequences
converging to distinct irrational reals. This follows from 
 Lemma 1 and Lemma 2 of \cite{akemann}.
\end{remark}

\begin{lemma}\label{ad} Suppose that $\kappa$ is a  cardinal of uncountable cofinality
not bigger than continuum, $X\subseteq [0,1]$ is uncountable and that 
$\A=\{A_x: x\in X\}$   is an $\R$-embeddable almost disjoint family of infinite subsets of $\N$.
  Then
\noindent given
\begin{enumerate}
\item $k\in\N$,
\item a finite $F\subseteq \N$,
\item a collection $\{a_\xi: \xi<\kappa\}$
of pairwise disjoint finite subsets of $X$ with $a_\xi=\{x_1^\xi, ..., x_k^\xi\}$ 
($x_i^\xi\not=x^\xi_j$ for $1\leq i<j\leq k$)
for any $\xi<\kappa$ 
  such that
$$A_{x_i^\xi}\cap A_{x_j^\xi}\subseteq F$$
for any $\xi<\kappa$ and any $1\leq i<j\leq k$
\end{enumerate}
there is a subset $\Gamma\subseteq \kappa$ of cardinality $\kappa$
such that for every  $\xi,\eta\in \Gamma$
we have 
$$A_{x_i^\xi}\cap A_{x_j^\eta}\subseteq F$$
for every  $1\leq i<j\leq k$.
\end{lemma}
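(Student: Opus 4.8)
The plan is to first dispose of the fixed finite set $F$ and then to exploit the $\R$-embedding to separate the relevant tails by a \emph{fixed} system of rational intervals. Write $B_x=A_x\setminus F$ for $x\in X$; since each $A_x$ is infinite and $F$ is finite, $B_x$ is infinite, and the desired conclusion $A_{x_i^\xi}\cap A_{x_j^\eta}\subseteq F$ is equivalent to $B_{x_i^\xi}\cap B_{x_j^\eta}=\emptyset$. In this language hypothesis (3) says exactly that for each $\xi$ the sets $B_{x_1^\xi},\dots,B_{x_k^\xi}$ are pairwise disjoint, and the task is to find $\Gamma\subseteq\kappa$ of cardinality $\kappa$ making the \emph{cross} intersections $B_{x_i^\xi}\cap B_{x_j^\eta}$ empty as well.

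Fix $\phi\colon\N\to\R$ witnessing $\R$-embeddability, so that $\phi[A_x]$ is the range of a sequence converging to a real $t_x$, with $t_x\ne t_y$ for $x\ne y$. For a fixed $\xi$ the $k$ reals $t_{x_1^\xi},\dots,t_{x_k^\xi}$ are distinct, so I would choose pairwise disjoint open intervals $I_1^\xi,\dots,I_k^\xi$ with rational endpoints and $t_{x_i^\xi}\in I_i^\xi$. There are only countably many $k$-tuples of such intervals, so by the uncountable cofinality of $\kappa$ (so that $\kappa$ is not a union of countably many sets of smaller cardinality, whence one class of a countable partition of $\kappa$ has size $\kappa$) I pass to a set of size $\kappa$ on which the tuple $(I_1^\xi,\dots,I_k^\xi)$ is a constant $(I_1,\dots,I_k)$; the $I_i$ are then pairwise disjoint and satisfy $t_{x_i^\xi}\in I_i$ for every surviving $\xi$ and every $i$. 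Since $\phi[A_{x_i^\xi}]\to t_{x_i^\xi}\in I_i$ and $I_i$ is open, all but finitely many elements of $A_{x_i^\xi}$ have $\phi$-value in $I_i$, so the exceptional set $E_i^\xi=\{n\in B_{x_i^\xi}:\phi(n)\notin I_i\}$ is finite. Applying the same countable pigeonhole once more to the $k$-tuples $(E_1^\xi,\dots,E_k^\xi)$ of finite subsets of $\N$, I pass to a further set $\Gamma$ of size $\kappa$ on which these are constant, say $E_i^\xi=E_i$.

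It then remains to verify that $\Gamma$ works. Fix $\xi,\eta\in\Gamma$ and $i\ne j$, and split $B_{x_i^\xi}=T_i^\xi\cup E_i$ with $T_i^\xi=\{n\in B_{x_i^\xi}:\phi(n)\in I_i\}$, and likewise for $\eta,j$. Then $B_{x_i^\xi}\cap B_{x_j^\eta}$ is a union of four pieces. The tail--tail piece $T_i^\xi\cap T_j^\eta$ is empty, because $\phi$ sends $T_i^\xi$ into $I_i$ and $T_j^\eta$ into $I_j$ while $I_i\cap I_j=\emptyset$. For each of the three remaining pieces I would use that the exceptional sets are constant: $E_i=E_i^\eta\subseteq B_{x_i^\eta}$ gives $E_i\cap T_j^\eta\subseteq B_{x_i^\eta}\cap B_{x_j^\eta}=\emptyset$ by (3); symmetrically $T_i^\xi\cap E_j\subseteq B_{x_i^\xi}\cap B_{x_j^\xi}=\emptyset$ using $E_j=E_j^\xi$; and $E_i\cap E_j\subseteq B_{x_i^\xi}\cap B_{x_j^\xi}=\emptyset$ using $E_i=E_i^\xi$ and $E_j=E_j^\xi$. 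Hence all four pieces vanish, so $B_{x_i^\xi}\cap B_{x_j^\eta}=\emptyset$, which is the required statement (indeed for all $i\ne j$, a fortiori for $i<j$).

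The only genuine obstacle is the tail--tail term, that is, the possibility that the sequences $\phi[A_{x_i^\xi}]$ and $\phi[A_{x_j^\eta}]$ keep meeting: almost disjointness alone makes this meeting finite but not empty, and this is precisely the point at which $\R$-embeddability is indispensable (a Luzin family would fail here). The device that overcomes it is the two-fold pigeonhole licensed by the uncountable cofinality of $\kappa$ together with the countability of the families of rational intervals and of finite subsets of $\N$: once the separating intervals are made constant the surviving tails lie in fixed disjoint intervals, and once the finite exceptional sets are made constant the three mixed terms collapse into the within-block disjointness already granted by hypothesis (3).
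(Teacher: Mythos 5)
Your proof is correct and follows essentially the same route as the paper's: use the $\R$-embedding to push the tails of the sets $A_{x_i^\xi}$ into a \emph{fixed} $k$-tuple of pairwise disjoint intervals, stabilize the finite exceptional sets by a countable pigeonhole, and then reduce every cross-intersection $A_{x_i^\xi}\cap A_{x_j^\eta}$ to the within-block intersections already controlled by hypothesis (3). The only divergence is in how the fixed disjoint intervals are produced -- the paper first uniformizes $\delta_\xi=\min_{i<j}|x_i^\xi-x_j^\xi|$ and takes a complete accumulation point of the tuples $(x_1^\xi,\dots,x_k^\xi)$ in $[0,1]^k$, whereas you pigeonhole directly over the countably many $k$-tuples of disjoint rational intervals around the limit points, a slightly more elementary shortcut that does the same job.
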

\begin{proof} 
Let $\phi:\N\rightarrow \R$ be as in the definition of $\R$-embeddability.
By composing it with a homeomorphism from $\R$ onto $(0,1)$ we may assume that
$Y=\phi[\N]\subseteq [0,1]$. As the properties stated in the lemma do not change if we relabel
the elements of $\A$, we may assume that 
  $\phi[A_x]=\{q_n^x: n\in \N\}\subseteq[0,1]\cap Y$ is
such that $(q_n^x)_{n\in \N}$ converges to $x$. 

Fix $k\in \N$ and a finite $F\subseteq \N$. Let $\{a_\xi: \xi<\omega_1\}$ be
a collection
of pairwise disjoint finite subsets of $X$ 
with $a_\xi=\{x_1^\xi, ..., x_k^\xi\}$ as in the Lemma. Using the uncountable
cofinality of $\kappa$, by passing to
 subset of cardinality $\kappa$ we may assume that for all $\xi<\kappa$ we have
$\delta_\xi>\delta$ for some $\delta>0$, where
$$\delta_\xi=\min(\{|x_i^\xi-x_j^\xi|: 1\leq i<j\leq k\}.$$
Now note that there is
$(x_1, \dots, x_k)\in [0,1]^k$ such that every Euclidean neighbourhood
of $(x_1, \dots, x_k)$ contains $\kappa$-many points $(x_1^\xi, ..., x_k^\xi)$
for $\xi<\kappa$. This is because otherwise we can cover $[0,1]^k$ by open
sets each containing less than $\kappa$-many of the points $(x_1^\xi, ..., x_k^\xi)$ and 
the existence of a finite subcover 
would contradict the fact that all
$(x_1^\xi, ..., x_k^\xi)$s are distinct as the sets $\{x_1^\xi, ..., x_k^\xi\}$
are pairwise disjoint.

Since $\delta_\xi>\delta$ for every $\xi<\kappa$ we  have that
$\min(\{|x_i-x_j|: 1\leq i<j\leq\}\geq\delta$, and in particular all
$x_i$s for $1\leq i\leq k$ are distinct.
Let $I_1, \dots I_k$ be open intervals such that $x_i\in I_i$ and $I_i\cap I_j=\emptyset$
for all $1\leq i<j\leq k$. By the choice of $(x_1, \dots, x_k)$
there is  $\Gamma'\subseteq \kappa$ of cardinality $\kappa$ such that
$$x_i^\xi\in I_i$$
for all $\xi\in \Gamma'$  and all $1\leq i\leq k$.

Since the sequence $(q_n^{x_i^\xi})_{n\in\N}$ converges to ${x_i^\xi}$,
for every $\xi\in \Gamma'$ there is a sequence $(F_1^\xi, \dots F_k^\xi)$ of finite subsets
of $\N$ such that for each $1\leq i\leq k$ we have 
\begin{enumerate}
\item $F_i^\xi\subseteq A_{x^i_\xi}$
\item $\phi[A_{x^i_\xi}\setminus F_i^\xi]\subseteq I_i$
\end{enumerate}
As $I_i\cap I_j=\emptyset$ for distinct $i, j\leq k$  condition (2) implies that
\begin{enumerate}[(3)]
\item $(A_{x^i_\xi}\setminus F_i^\xi)\cap (A_{x^j_\eta}\setminus F_j^\eta)=\emptyset$
for any $\xi, \eta<\kappa$ and  $1\leq i<j\leq k$.
\end{enumerate}
As the cofinality of $\kappa$ is uncountable and
there are countably many of such $(F_1^\xi, \dots F_k^\xi)$s (as they are all subsets
of  $\N$) we may
 choose $\Gamma\subseteq \Gamma'$ of cardinality $\kappa$ such that
$(F_1^\xi, \dots F_k^\xi)$ is equal to some fixed $(F_1, \dots F_k)$. 

It remains to prove the property of $\Gamma$ stated in the Lemma. Fix distinct
$\xi, \eta\in \Gamma$ and $1\leq i<j\leq k$. Then by (1) and (3)
for all $1\leq i<j\leq k$ we have
$$A_{x_i^\xi}\cap A_{x_j^\eta}=(F_i\cap F_j)\cup (A_{x_i^\xi}\cap F_j)\cup (A_{x_j^\eta}\cap F_i).$$
By (1) this set is included in $(A_{x_i^\xi}\cap A_{x_j^\xi})\cup (A_{x_j^\eta}\cap A_{x_i^\eta})$
which is included in $F$ by the hypothesis of the lemma. So
we obtain $A_{x_i^\xi}\cap A_{x_j^\eta}\subseteq F$ as required.

\end{proof}

\begin{proposition}\label{Rembed} Suppose
that $\A$ is an $\R$-embeddable  almost disjoint family of subsets of $\N$. 
Whenever $\varepsilon>0$ and
 $\Y$ is a subset of
of the unit sphere of $(\X_\A, \|\ \|_\infty)$  of regular uncountable cardinality $\kappa$, there is 
$\Y'\subseteq \Y$ of cardinality $\kappa$
which is $(1+\varepsilon)$-concentrated.
\end{proposition}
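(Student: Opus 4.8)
The plan is to approximate every $y\in\Y$ by a finitely supported combination of the generators of $\X_\A$ and then to use the combinatorics of Lemma \ref{ad} to force the supports of these approximations to overlap as little as they do in $c_0(\Gamma)$. Note first that $\kappa\leq\cc$ since $\Y\subseteq\ell_\infty$, so Lemma \ref{ad} will be available; write $\A=\{A_x:x\in X\}$ with $X\subseteq[0,1]$. For each $y\in\Y$ I would choose $f_y=g_y+\sum_{i=1}^{k_y}q_i^y1_{A_{x_i^y}}$ with $\|y-f_y\|_\infty<\varepsilon/4$, where $g_y\in c_0$ is finitely supported with rational values, the $q_i^y$ are rational, and the $x_i^y\in X$ are distinct. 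Since $\kappa$ is regular and uncountable, I first pass to a subset of size $\kappa$ on which $k_y=k$ is constant.

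The next step disentangles the labels shared by $\kappa$-many $y$. Applying the $\Delta$-system lemma to the finite label sets $\{x_1^y,\dots,x_k^y\}\subseteq X$ gives a subset of size $\kappa$ forming a sunflower with finite root $R$ of size $m$. Re-indexing the coordinates of each $y$ so that $R$ occupies the first $m$ places in a fixed order, I obtain $x_1^y=r_1,\dots,x_m^y=r_m$ fixed while the remaining petals $x_{m+1}^y,\dots,x_k^y$ are pairwise disjoint for distinct $y$. Only now do I stabilize the (re-indexed, still rational) coefficient tuple $(q_1^y,\dots,q_k^y)=(q_1,\dots,q_k)$ and $g_y=g$ over their countable ranges. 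Then $h:=g+\sum_{l=1}^m q_l1_{A_{r_l}}$ no longer depends on $y$, and $f_y-f_{y'}=\sum_{i=m+1}^k q_i(1_{A_{x_i^y}}-1_{A_{x_i^{y'}}})$, so that the $c_0$-part and all shared terms cancel.

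I would then fix a single finite set $F$ to do all the work. Because $[\N]^{<\omega}$ is countable, I can pass to a further subset of size $\kappa$ on which the finite set $\bigcup_{m<i<j\leq k}(A_{x_i^y}\cap A_{x_j^y})$ equals a fixed $F$, so all within-$y$ petal overlaps lie in $F$; a second pigeonhole over the finitely many membership patterns of the points of $F$ in the petal sets lets me also assume $f_y|_F$ is constant. Now the hypotheses of Lemma \ref{ad} hold for the pairwise disjoint petal tuples and this $F$, and the lemma (together with its instance with the two indices interchanged) delivers a subset of size $\kappa$ on which $A_{x_i^y}\cap A_{x_j^{y'}}\subseteq F$ whenever $i\neq j$.

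It remains to estimate $\|f_y-f_{y'}\|_\infty$ pointwise for $y,y'$ in this final subset. On $F$ the difference vanishes because $f_y|_F=f_{y'}|_F$. For $n\notin F$ the overlap control guarantees that $n$ lies in at most one petal of $y$ and at most one of $y'$, and if it lies in one of each then these carry the same index; hence the same-index contributions cancel and $(f_y-f_{y'})(n)$ equals $0$ or a single $\pm q_i$. By Lemma \ref{functions-ad}, $q_i=\lim_{n\in A_{x_i^y}}f_y(n)$, so $|q_i|\leq\|f_y\|_\infty\leq 1+\varepsilon/4$; thus $\|f_y-f_{y'}\|_\infty\leq 1+\varepsilon/4$ and $\|y-y'\|_\infty\leq 1+3\varepsilon/4<1+\varepsilon$, giving the desired $\Y'$. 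The heart of the matter, and the only place $\R$-embeddability enters, is the uniform cross-overlap bound $A_{x_i^y}\cap A_{x_j^{y'}}\subseteq F$ for $i\neq j$ supplied by Lemma \ref{ad}; the delicate part is the bookkeeping that makes one finite $F$ simultaneously govern the within-$y$ overlaps, the cross-overlaps and the restrictions $f_y|_F$, while the shared labels are absorbed into the $y$-independent $h$ and the same-index overlaps cancel for free.
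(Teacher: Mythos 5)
Your proposal is correct and follows essentially the same route as the paper's proof: rational finite approximants, pigeonhole stabilizations using regularity of $\kappa$, a $\Delta$-system to isolate pairwise disjoint ``petal'' labels, Lemma \ref{ad} to push all cross-overlaps into one fixed finite $F$, and the pointwise case analysis giving the bound $\max_i|q_i|$ via Lemma \ref{functions-ad}. The only (immaterial) differences are bookkeeping choices such as $\varepsilon/4$ versus $\varepsilon/3$ and the ordering of root versus petals; the degenerate case where all labels lie in the root is handled implicitly by your empty-sum formula, where the paper treats it separately.
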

\begin{proof} 

We may label elements of $\A$ as $A_x$s for $x\in X$  for some $X\subseteq[0,1]$.

Fix a subset $\Y=\{f_\xi: \xi<\kappa\}$ 
of the unit sphere of $(\mathcal X_\A,\|\ \|_\infty)$ of uncountable regular cardinality $\kappa$.
By the definition of $\X_\A$ for each $\xi<\kappa$ there are finite sets
$\{x^\xi_1, \dots x^\xi_{m_\xi}\}\subseteq X$ ($x^\xi_i\not=x^\xi_j$ for
 $1\leq i< j\leq m_\xi$ and $\xi<\kappa$), distinct rationals
$q^\xi_1, \dots q^\xi_{m_\xi}$ for all $\xi<\kappa$
and  rational valued finitely supported  $g_\xi\in c_0$ 
such that
$$\Big\|f_\xi-\big(g_\xi+\sum_{1\leq i\leq m_\xi} q^\xi_i 1_{A_{x^\xi_i}}   \big)\Big\|_\infty
\leq \varepsilon/3.$$
Using the uncountable cofinality
of $\kappa$ by passing to a subset of cardinality $\kappa$ we may assume that for all $\xi<\omega_1$ we have
$m_\xi=m$ for some $m\in \N$, $g_\xi=g$ 
for some finitely supported $g\in c_0$, and
that $q^\xi_i=q_i$ for some rationals $q_i$ for all $1\leq i\leq m$ and for all $\xi<\kappa$.
Since for a regular cardinal $\kappa$
any family of $\kappa$-many finite sets contains a subfamily of cardinality $\kappa$
such that the intersection of any two distinct members of the subfamily is a fixed 
finite $\Delta\subseteq\kappa$
(A version of the $\Delta$-system Lemma, Ex 25.3 of \cite{komjath})
by passing to a subset of cardinality $\kappa$ we may assume that there is such a $\Delta$
satisfying
$$\{x^\xi_1, \dots x^\xi_{m}\}\cap \{x^\eta_1, \dots x^\eta_{m}\}=\Delta$$
for every $\xi<\eta<\kappa$. By passing to a subset of cardinality $\kappa$
we may assume that there is a fixed $G\subseteq\{1, \dots, m\}$ such that
$\Delta=\{x^\xi_i: i\in G\}$ for every $\xi<\kappa$. 
If $G=\{1, \dots, m\}$, we conclude that $\{f_\xi: \xi<\kappa\}$  is even $2\varepsilon/3$-concentrated,
so we will assume that $G$ is a proper subset of $\{1, \dots, m\}$.
So by reordering $\{1, \dots, m\}$ we may  assume  that 
there is $1\leq k\leq m$ such that the sets $\{x^\xi_1, \dots x^\xi_{k}\}$ are pairwise disjoint
for $\xi<\kappa$ and $\{x^\xi_{k+1}, \dots x^\xi_{m}\}=\{x_{k+1}, \dots x_m\}=\Delta$ for some
$x_{k+1}, \dots x_m\in X$ for each $\xi<\kappa$.   So for every $\xi<\kappa$ we have
$$\Big\|f_\xi-\big(g+\sum_{1\leq i\leq m} q_i 1_{A_{x^\xi_i}}   
\big)\Big\|_\infty\leq \varepsilon/3.\leqno (1)$$
And for every $\xi<\eta<\kappa$ we have
$$\|f_\xi-f_\eta\|_\infty\leq \|\sum_{1\leq i\leq k} q_i 1_{A_{x^\xi_i}}-\sum_{1\leq i\leq k} 
q_i 1_{A_{x^\eta_i}}\|_\infty+2\varepsilon/3.\leqno (2)$$
Also as for each $1\leq j\leq k$ we have $\lim_{n\in A_{x^\xi_j}}|f_\xi(n)|\leq 1$ since 
$f_\xi$s are taken from the unit sphere in the $\|\ \|_\infty$-norm, and $\lim_{n\in A_{x^\xi_j}}g(n)=0$
as $g\in c_0$ and
$$\lim_{n\in A_{x^\xi_j}}\Big(\sum_{1\leq i\leq m} q_i 1_{A_{x^\xi_i}}\Big)(n)=q_j,$$
by Lemma \ref{functions-ad}, so we may conclude from (1) that for all $1\leq j\leq k$ we have 
$$|q_j|\leq 1+\varepsilon/3.\leqno (3)$$
Moreover, since there are countably many
finite subsets of $\N$ and $\kappa$ is of uncountable cofinality, we may assume that there is a
fixed  finite $F\subseteq \N$ such that
\begin{enumerate}[(4)]
\item $A_{x^\xi_i}\cap A_{x^\xi_j}\subseteq F$ for all $1\leq i<j\leq m$ and every $\xi<\kappa$.
\end{enumerate}
Using Lemma \ref{ad} and the hypothesis that $\A$ is $\R$-embeddable 
there is  subset $\Gamma\subseteq\kappa$
of cardinality $\kappa$ such  that
\begin{enumerate}[(5)]
\item $A_{x^\xi_i}\cap A_{x^\eta_j}\subseteq F$  for any $1\leq i<j\leq k$ and
any $\xi, \eta\in \Gamma$.
\end{enumerate}
  Since there are countably many rational valued functions 
defined on $F$, by passing to a subset of $\Gamma$ of cardinality $\kappa$ we may also assume that 
for each $\xi, \eta\in \Gamma$ and each $n\in F$ we have 
$$\big(\sum_{1\leq i\leq k} q_i 1_{A_{x^\xi_i}}\big)(n)=
\big(\sum_{1\leq i\leq k} q_i 1_{A_{x^\eta_i}}\big)(n).$$
So by  (5) the following are the only possible cases for $n\in\N$:
$$
  \Big(\sum_{1\leq i\leq k} q_i 1_{A_{x^\xi_i}}-\sum_{1\leq i\leq k} 
q_i 1_{A_{x^\eta_i}}\Big)(n)
 =
  \begin{cases}
    0 & \text{if $n\in F$,} \\
    q_i-q_i=0 & \text{if $n\in A_{x^\xi_i}\cap A_{x^\eta_i}\setminus F$ for some $i$,}\\
    q_i& \text{if $n\in A_{x^\xi_i}\setminus (A_{x^\eta_i}\cup F)$ for some $i$,}\\
    -q_i& \text{if $n\in A_{x^\eta_i}\setminus (A_{x^\xi_i}\cup F)$ for some $i$,}\\
    0& \text{if $n\not\in \bigcup_{1\leq i\leq k}(A_{x^\xi_i}\cup A_{x^\eta_i})$.}
  \end{cases}
$$
By (3) it follows that
$$\Big\|\sum_{1\leq i\leq k} q_i 1_{A_{x^\xi_i}}-\sum_{1\leq i\leq k} 
q_i 1_{A_{x^\eta_i}}\Big\|_\infty\leq \max\{|q_i|: 1\leq i\leq k\}\leq 1+\varepsilon/3$$
Which by (2) implies the required $\|f_\xi-f_\eta\|\leq 1+\varepsilon$ for any $\xi, \eta\in \Gamma$.
\end{proof}

\begin{remark} We note that in the language of the paper \cite{nLuzin} of
 Hru\v s\'ak and Guzm\'an the property of an almost disjoint family $\A$ from Lemma \ref{ad}
 means  that $\A$ contains no $n$-Luzin gap for any $n\in \N$. They showed
 that under the assumption of   {\sf MA} and the negation of 
 {\sf CH} every
 almost disjoint family of cardinality smaller than continuum which
 contains no $n$-Luzin gaps for any $n\in \N$ is  $\R$-embeddable.
 \end{remark}
 
 \begin{remark} By Proposition \ref{Rembed} for every $\varepsilon>0$ the space $(\X_\A, \|\ \|_\infty)$
 for an $\R$-embeddable almost disjoint family $\A$ admits no
 uncountable $(1+\varepsilon)$-separated set in its unit sphere. Nevertheless
 such spaces always admit $(1+)$-separated sets of unit vectors of the cardinality equal to
 the density of $\X_\A$.
 For $A\in \A$ define 
 $$f_A=1_A-\sum\{{1\over{k+1}}1_{\{k\}}: k\in \N\setminus A\}$$
 Then given any two distinct $A, A'\in \A$, choose $k\in A\setminus A'$
 and observe that we have $\|f_A-f_{A'}\|\geq |f_A-f_{A'}|(k)=1-(-1/({k+1}))>1$.
 \end{remark}

\section{Separation in  spheres of Banach spaces $(\X_\A, \|\ \|_T)$ 
induced by Luzin almost disjoint families}\label{luzin}

\begin{definition}\label{def-luzin} An almost disjoint family $\{A_\xi: \xi<\omega_1\}$ is called a Luzin family
if $f_\alpha: \alpha\rightarrow \N$ is finite-to-one for any $\alpha<\omega_1$, where
$$f_\alpha(\beta)=\max(A_\beta\cap A_\alpha)$$
for every $\beta<\alpha<\omega_1$.
\end{definition}

\begin{lemma}[\cite{luzin}] Luzin families exist.
\end{lemma}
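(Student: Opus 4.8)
The plan is to construct the family $\{A_\alpha:\alpha<\omega_1\}$ by transfinite recursion of length $\omega_1$, maintaining at every stage the inductive hypotheses that the sets produced so far form an almost disjoint family of infinite subsets of $\N$ whose pairwise intersections are all nonempty, and that for each $\alpha$ the function $f_\alpha$ of Definition \ref{def-luzin} is finite-to-one. The crucial observation is that the Luzin condition is \emph{local}: the value $f_\gamma(\beta)=\max(A_\beta\cap A_\gamma)$ depends only on $A_\gamma$ and on the earlier $A_\beta$, so it suffices to guarantee, at the moment $A_\alpha$ is introduced, that $f_\alpha$ is finite-to-one on $\alpha$. Equivalently, I must arrange that for every $m\in\N$ only finitely many $\beta<\alpha$ satisfy $\max(A_\beta\cap A_\alpha)\le m$.

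At a countably infinite stage $\alpha$ I fix an enumeration $\alpha=\{\beta_n:n\in\N\}$ (possible since $\alpha$ is countable) and build $A_\alpha=\{a_n:n\in\N\}$ by recursion on $n$ subject to
$$a_n\in A_{\beta_n}\setminus\bigcup_{i<n}A_{\beta_i},\qquad a_n>\max\big(\{n\}\cup\{a_i:i<n\}\big).$$
Such a choice is always available: by the inductive hypothesis the family $\{A_{\beta_i}:i\in\N\}$ is almost disjoint, so $A_{\beta_n}\cap A_{\beta_i}$ is finite for each $i<n$, whence $A_{\beta_n}\setminus\bigcup_{i<n}A_{\beta_i}$ is cofinite in the infinite set $A_{\beta_n}$ and in particular infinite. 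I then set $A_\alpha=\{a_n:n\in\N\}$, which is infinite because the $a_n$ are strictly increasing.

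The verification splits into the two maintained properties. For almost disjointness, if $n>k$ then $a_n\notin A_{\beta_k}$ by the removal clause, so $A_\alpha\cap A_{\beta_k}\subseteq\{a_0,\dots,a_k\}$ is finite; since $a_k\in A_{\beta_k}$ the intersection is nonempty, and as $a_k$ is its largest element we get $\max(A_\alpha\cap A_{\beta_k})=a_k$. For the Luzin condition, fix $m\in\N$: then $\{k:\max(A_\alpha\cap A_{\beta_k})\le m\}=\{k:a_k\le m\}$ is finite because $(a_n)$ is strictly increasing (indeed $a_k>k$). Reading this back through the enumeration shows $\{\beta<\alpha:\max(A_\beta\cap A_\alpha)\le m\}$ is finite, i.e.\ $f_\alpha$ is finite-to-one. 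Finite stages are trivial, since there $f_\alpha$ has finite domain, and one may choose $A_\alpha$ by the same recipe with a finite enumeration.

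The main obstacle is the tension between the two requirements: $A_\alpha$ must meet each earlier $A_{\beta_k}$ high enough to push the values of $f_\alpha$ off to infinity, yet the intersections must stay finite. Both are resolved by the single device of choosing $a_n$ outside $A_{\beta_0}\cup\dots\cup A_{\beta_{n-1}}$, which simultaneously confines $A_\alpha\cap A_{\beta_k}$ to the initial segment $\{a_0,\dots,a_k\}$ and pins its maximum to the strictly increasing value $a_k$; the almost disjointness of the part of the family constructed so far is precisely what allows this removal without exhausting $A_{\beta_n}$.
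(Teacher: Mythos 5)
Your construction is the standard Luzin diagonalization, and it is correct at every infinite stage $\alpha$; the paper itself offers no proof, deferring to \cite{luzin} and to Section 3.1 of \cite{hrusak-ad}, and what you wrote is essentially the argument found there. The key computation $\max(A_\alpha\cap A_{\beta_k})=a_k$ together with $a_k>k$ cleanly yields both almost disjointness and the finite-to-one property of $f_\alpha$, and you correctly note that introducing $A_\alpha$ cannot disturb $f_\gamma$ for $\gamma<\alpha$, so the condition really is local.

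The one place the write-up slips is the finite stages. Running ``the same recipe with a finite enumeration'' of $\alpha<\omega$ produces a \emph{finite} set $A_\alpha$, whereas every member of the family must be infinite; and the obvious repair --- padding the finite transversal out to an infinite set almost disjoint from $A_{\beta_0},\dots,A_{\beta_{n-1}}$ --- is not automatic for an arbitrary finite almost disjoint family (if $A_0=\N$, or if $A_0$ and $A_1$ partition $\N$, no infinite set almost disjoint from all of them exists). The standard fix is to seed the recursion: take the first $\omega$ sets to be, say, a partition of $\N\setminus\{0\}$ into infinitely many infinite pieces, each augmented by the point $0$, so that they are pairwise almost disjoint with nonempty intersections; the Luzin condition is vacuous for $\alpha<\omega$ since any function on a finite domain is finite-to-one. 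With that adjustment your recursion runs for $\omega\le\alpha<\omega_1$ exactly as written and the proof is complete.
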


Luzin families were first constructed in \cite{luzin}.
See Section 3.1. of \cite{hrusak-ad} for the construction and for more information.

\begin{proposition}\label{luzin-2} Suppose that $\mathcal L=\{A_\xi: \xi<\omega_1\}$ is  a Luzin family, 
and $\xi_\alpha, \eta_\alpha$  satisfy
$$\xi_\beta<\eta_\beta<\xi_\alpha<\eta_\alpha<\omega_1$$
for all $\beta<\alpha<\omega_1$.
Then $(\X_{\mathcal L}, \|\ \|_\infty)$ admits  an uncountable $2$-equilateral set $\{f_\alpha: \alpha\in X\}$
among elements
of its unit sphere of the form
$$f_\alpha=1_{A_{\xi_\alpha}}-1_{A_{\eta_\alpha}}$$
for all $\alpha\in \Gamma$  for some uncountable $\Gamma\subseteq\omega_1$.
\end{proposition}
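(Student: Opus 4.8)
We need to understand the geometry of the functions $f_\alpha = 1_{A_{\xi_\alpha}} - 1_{A_{\eta_\alpha}}$ in the supremum norm, and then exploit the defining property of a Luzin family to control intersections. Let me think about what needs to happen.

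First, each $f_\alpha$ takes the value $+1$ on $A_{\xi_\alpha}\setminus A_{\eta_\alpha}$, the value $-1$ on $A_{\eta_\alpha}\setminus A_{\xi_\alpha}$, and $0$ elsewhere (except possibly on the finite set $A_{\xi_\alpha}\cap A_{\eta_\alpha}$, where it vanishes). Since $\xi_\alpha\neq\eta_\alpha$, these are genuinely different elements of $\mathcal{L}$, so both difference sets are infinite; hence $\|f_\alpha\|_\infty = 1$ and each $f_\alpha$ lies on the unit sphere, establishing the "unit sphere" claim. So the real content is the $2$-equilateral claim: for a suitable uncountable $\Gamma$ and distinct $\alpha,\beta\in\Gamma$, we want $\|f_\alpha - f_\beta\|_\infty = 2$.

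Consider $f_\alpha - f_\beta = 1_{A_{\xi_\alpha}} - 1_{A_{\eta_\alpha}} - 1_{A_{\xi_\beta}} + 1_{A_{\eta_\beta}}$. To get value exactly $2$ we want a coordinate where, say, $f_\alpha = 1$ and $f_\beta = -1$ simultaneously; concretely a point $n \in A_{\xi_\alpha}\setminus A_{\eta_\alpha}$ which also lies in $A_{\eta_\beta}\setminus A_{\xi_\beta}$. The upper bound $\|f_\alpha - f_\beta\|_\infty \le 2$ is trivial since each summand is bounded by $1$ and at any coordinate at most two of the four characteristic functions can contribute with the same sign (in fact $|f_\alpha(n)|\le 1$ for all $\alpha, n$). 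The difficulty is the lower bound: I need to \emph{produce} such a coordinate for all pairs in a large uncountable subfamily.

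This is exactly where the Luzin property enters, and I expect it to be the main obstacle. The idea is to arrange, after passing to an uncountable subfamily, that the four indices interleave as in the hypothesis $\xi_\beta < \eta_\beta < \xi_\alpha < \eta_\alpha$, and then use that $f_\alpha : \alpha \to \N$ given by $f_\alpha(\gamma) = \max(A_\gamma \cap A_\alpha)$ is finite-to-one. The plan is: fix a large natural number bound and, via the finite-to-one property together with a pressing-down / $\Delta$-system style argument, thin out $\{A_\xi\}$ so that for $\beta<\alpha$ the intersections $A_{\xi_\beta}\cap A_{\eta_\alpha}$, $A_{\xi_\beta}\cap A_{\xi_\alpha}$, etc., are pushed past any fixed finite threshold. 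The Luzin property says precisely that for each fixed $\alpha$ only finitely many earlier $\gamma$ can have $\max(A_\gamma\cap A_{\eta_\alpha})$ below a given value; running this over the relevant pairings and using the uncountable cofinality of $\omega_1$, I can guarantee that the tails of $A_{\eta_\beta}$ and $A_{\xi_\alpha}$ (for $\beta<\alpha$) meet in a point that avoids the finitely many "contaminating" coordinates of the other two sets. At that coordinate $f_\beta = 1$ and $f_\alpha = -1$ (or the symmetric configuration), forcing $|f_\alpha - f_\beta|(n) = 2$.

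I would organize the write-up as follows. First record the normalization and the trivial upper bound. Then make precise the combinatorial target: for distinct $\alpha,\beta\in\Gamma$ there exists $n$ with $f_\alpha(n)\,f_\beta(n) = -1$. Then invoke the Luzin definition: since each $f_{\eta_\alpha}$ is finite-to-one, the set of $\beta<\alpha$ for which $A_{\eta_\beta}$ (or $A_{\xi_\beta}$) meets $A_{\xi_\alpha}$ or $A_{\eta_\alpha}$ only within some fixed finite initial segment is "small" in the right sense, and a counting/stationary argument extracts the uncountable $\Gamma$ on which the desired crossing coordinate always exists beyond all the almost-disjoint finite intersections. The delicate point is bookkeeping the four-way interaction among $A_{\xi_\alpha}, A_{\eta_\alpha}, A_{\xi_\beta}, A_{\eta_\beta}$ simultaneously for \emph{all} pairs, which is where the strength of the Luzin condition (as opposed to mere almost disjointness) is genuinely used; by contrast, an arbitrary almost disjoint family would only give $\|f_\alpha - f_\beta\|_\infty$ close to $2$ rather than equal, and would not survive to an uncountable equilateral set.
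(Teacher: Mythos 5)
Your reduction is exactly the paper's: $\|f_\alpha\|_\infty=1$ because $A_{\xi_\alpha}\cap A_{\eta_\alpha}$ is finite while both sets are infinite; the bound $\|f_\alpha-f_\beta\|_\infty\le 2$ is trivial; after uniformizing the intersections $A_{\xi_\alpha}\cap A_{\eta_\alpha}$ to a fixed finite $F\subseteq\{1,\dots,k\}$, any $m>k$ lying in $A_{\xi_\alpha}\cap A_{\eta_\beta}$ automatically misses $A_{\eta_\alpha}$ and $A_{\xi_\beta}$ and witnesses $(f_\alpha-f_\beta)(m)=2$. The only point where you diverge from the paper is the one step with real content: producing an uncountable $\Gamma$ such that $\max(A_{\eta_\beta}\cap A_{\xi_\alpha})>k$ for \emph{every} pair $\beta<\alpha$ in $\Gamma$. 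The paper colors pairs according to whether $f_{\xi_\alpha}(\eta_\beta)\le k$ and invokes the Erd\H{o}s--Dushnik--Miller relation: a $0$-homogeneous set of order type $\omega+1$ would give infinitely many $\gamma<\xi_\alpha$ with $f_{\xi_\alpha}(\gamma)\le k$ at its top element, contradicting finite-to-one-ness. Your ``pressing-down / counting'' alternative does work once made precise: the Luzin property says each $B(\alpha)=\{\beta<\alpha: f_{\xi_\alpha}(\eta_\beta)\le k\}$ is finite, so the free-set theorem for finite set mappings on $\omega_1$ (Fodor's lemma applied to $\alpha\mapsto\max B(\alpha)+1$ on limit ordinals, then fixing $B(\alpha)=B$ on an uncountable set and discarding $B$) yields the required $\Gamma$; this is a legitimate and equally elementary substitute for the partition relation. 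But be aware that this is the step you leave as a gesture, and that a $\Delta$-system or almost-disjointness argument alone cannot supply the crossing coordinate --- only the finite-to-one-ness of the $f_{\xi_\alpha}$'s does. One peripheral remark of yours is off: for a general (e.g.\ $\R$-embeddable) almost disjoint family the distances $\|f_\alpha-f_\beta\|_\infty$ on a large subfamily collapse toward $1$, not toward $2$, as Proposition \ref{Rembed} shows.
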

\begin{proof}
It is clear that $\|f_\alpha\|_\infty\leq 1$ for each $\alpha<\omega_1$ 
as $f_\alpha$s may assume values among $\{-1,0, 1\}$. Since $A_{\xi_\alpha}\cap A_{\eta_\alpha}$
is finite and both $A_{\xi_\alpha}, A_{\eta_\alpha}$ are infinite, we have that $\|f_\alpha\|=1$
for each $\alpha<\omega_1$.
Let $F_\alpha\subseteq\N$ for $\alpha<\omega_1$ be such a finite set that 
$A_{\xi_\alpha}\cap A_{\eta_\alpha}\subseteq F_\alpha$. By passing to an uncountable subset
we may assume that $F_\alpha=F$ for each $\alpha<\omega_1$ and some finite $F\subseteq \N$.
Let $k\in\N$ be such that $F\subseteq \{1, \dots, k\}$. Now we will use the following version of Erd\"os-Dushnik-Miller theorem: 
whenever $\kappa$ is regular and uncountable cardinal  and $c:[\kappa]^2\rightarrow \{0,1\}$,
then either there is $1$-monochromatic set of cardinality $\kappa$ or
there is a $0$-monochromatic set of order type $\omega+1$ (see 24.32 of \cite{komjath}).
Consider a coloring $c:[\omega_1]^{2}\rightarrow \{0,1\}$ defined by
$c(\{\beta, \alpha\})=0$ if  $f_{\xi_\alpha}(\eta_\beta)\leq k$ 
for the ordering of the elements $\beta<\alpha$ and
$c(\{\beta, \alpha\})=1$ otherwise. Note that there cannot be a $0$-monochromatic set
$\Delta\subseteq\omega_1$ of order type $\omega+1$ 
because  denoting its biggest element as $\alpha$ we would 
have that $f_{\xi_\alpha}|\{\eta_\beta: \beta \in \Delta\cap\alpha\}$ is bounded 
below $k$ which would contradict the hypothesis that
$\mathcal L$ is a Luzin family i.e., $f_{\xi_\alpha}$s are finite-to-one. So it follows that
there is an uncountable $1$-monochromatic $\Gamma\subseteq \omega_1$ for the coloring $c$.

For $\alpha, \beta\in \Gamma$ with $\beta<\alpha$ 
there is $m>k$ such that $m\in A_{\xi_\alpha}\cap A_{\eta_\beta}$.
By the choice of $F$ and the fact that $F\subseteq \{1, \dots, k\}$ we have that
neither $m\in A_{\eta_\alpha}$ nor $m\in A_{\xi_\beta}$ and so
it follows that 
$$\|f_\alpha-f_\beta\|_\infty=\|(1_{A_{\xi_\alpha}}-1_{A_{\eta_\alpha}})
-(1_{A_{\xi_\beta}}-1_{A_{\eta_\beta}})\|_\infty=2.$$
\end{proof}

\begin{proposition}\label{luzin-renorm} Suppose that $\mathcal L=\{A_\xi: \xi<\omega_1\}$ is  a Luzin family, 
$\A$ is an almost disjoint family such that $\mathcal L\subseteq \A$,  $\X$ is a Banach space
and $T:\mathcal X_\A\rightarrow \X$ is a bounded linear operator with separable range.
Then the unit sphere of
$(\X_\A, \|\ \|_T)$ admits an uncountable $(1+{1\over 2})$-separated set.
\end{proposition}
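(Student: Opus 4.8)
The plan is to combine Proposition~\ref{luzin-2}, which furnishes a large supply of $2$-equilateral unit vectors in the supremum norm, with the hypothesis that $T$ has separable range, which forces the second summand $\|T(\cdot)\|_\X$ of the renorming $\|\ \|_T$ to behave like a norm on a separable space and hence (via Lemma~\ref{separable}) to contribute almost nothing to the distances within a sufficiently large subfamily.

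First I would fix a sequence of ordinals $\xi_\alpha<\eta_\alpha<\xi_{\alpha+1}$ as in the statement of Proposition~\ref{luzin-2}; since $\mathcal L\subseteq\A$ these all name members of $\A$, so the functions $f_\alpha=1_{A_{\xi_\alpha}}-1_{A_{\eta_\alpha}}$ live in $\X_\A$. Applying Proposition~\ref{luzin-2} I obtain an uncountable $\Gamma\subseteq\omega_1$ with $\|f_\alpha-f_\beta\|_\infty=2$ for all distinct $\alpha,\beta\in\Gamma$, each $f_\alpha$ of $\|\ \|_\infty$-norm $1$. These are not yet unit vectors of $(\X_\A,\|\ \|_T)$, so I will first normalize, but it is cleaner to control distances in the $\|\ \|_T$ norm directly and rescale at the very end.

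The key step is to discard the $T$-contribution. Since $\omega_1$ is a cardinal of uncountable cofinality and $T[\X_\A]$ is separable, Lemma~\ref{separable} applied to $\{T(f_\alpha):\alpha\in\Gamma\}$ with $\varepsilon=1/4$ (say) yields an uncountable $\Gamma'\subseteq\Gamma$ with $\|T(f_\alpha)-T(f_\beta)\|_\X\leq 1/4$ for all $\alpha,\beta\in\Gamma'$. Then for distinct $\alpha,\beta\in\Gamma'$ the reverse triangle inequality gives
$$\|f_\alpha-f_\beta\|_T=\|f_\alpha-f_\beta\|_\infty+\|T(f_\alpha)-T(f_\beta)\|_\X\geq 2-\tfrac14=\tfrac74,$$
while the norms of the vectors themselves satisfy $\|f_\alpha\|_T=\|f_\alpha\|_\infty+\|T(f_\alpha)\|_\X=1+\|T(f_\alpha)\|_\X\leq 1+\|T\|$. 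After one more application of the uncountable cofinality of $\omega_1$ I may thin $\Gamma'$ so that all the values $\|T(f_\alpha)\|_\X$ lie in a short interval and hence all $\|f_\alpha\|_T$ lie within a common narrow band $[c,c+1/8]$; normalizing $g_\alpha=f_\alpha/\|f_\alpha\|_T$ then moves the mutual distances by a controlled amount, and the separation survives as a $(1+\tfrac12)$-separation once the arithmetic is balanced (the slack $\tfrac74-1$ leaves comfortable room above $\tfrac32$). The main obstacle is precisely this bookkeeping: I must choose the threshold in Lemma~\ref{separable} small enough, and the band for $\|f_\alpha\|_T$ tight enough, that after dividing by norms bounded away from $1$ the guaranteed distance stays strictly above $\tfrac32$; this is the only delicate point, and it is handled by standard use of uncountable cofinality together with a Lemma~\ref{spheres}-type estimate relating distances before and after normalization.
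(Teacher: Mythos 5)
Your overall strategy is the right one, but there is a genuine gap at the normalization step, and it traces back to the \emph{order} in which you apply Lemma \ref{separable}. You first form the differences $f_\alpha=1_{A_{\xi_\alpha}}-1_{A_{\eta_\alpha}}$ for an arbitrary interleaved choice of ordinals and only afterwards concentrate the images, obtaining $\|T(f_\alpha)-T(f_\beta)\|_\X\leq 1/4$ on an uncountable set. This controls the mutual distances of the vectors $T(f_\alpha)$ but says nothing about their individual norms: they could all sit in a tiny ball around a point of large norm $M$. For instance, if $T$ has a summand $f\mapsto Mf(1)$ (composed with the injection into a one-dimensional space, so the range stays separable) and your interleaving happens to satisfy $1\in A_{\xi_\alpha}\setminus A_{\eta_\alpha}$ for every $\alpha$, then $\|T(f_\alpha)\|_\X\geq M$ while all differences $T(f_\alpha)-T(f_\beta)$ remain uniformly bounded. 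In that situation $\|f_\alpha\|_T=1+\|T(f_\alpha)\|_\X\geq 1+M$, and dividing by these norms shrinks all mutual distances by a factor of roughly $1+M$: one gets $\|g_\alpha-g_\beta\|_T\leq (2+1/4)/(1+M)$ up to a negligible correction, which for large $M$ is nowhere near $3/2$. Your remark that ``the slack $7/4-1$ leaves comfortable room above $3/2$'' implicitly assumes that $\|f_\alpha\|_T$ is close to $1$, i.e.\ that $\|T(f_\alpha)\|_\X$ is close to $0$, and nothing in your argument delivers that. (Incidentally, the displayed inequality $\|f_\alpha-f_\beta\|_T\geq 2-\frac14$ is also off: the term $\|T(f_\alpha)-T(f_\beta)\|_\X$ is \emph{added} to $\|f_\alpha-f_\beta\|_\infty$, so the correct lower bound is simply $2$; but that is not the fatal point.)

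The paper avoids the problem by applying Lemma \ref{separable} \emph{before} forming the differences: it first finds an uncountable $\Xi\subseteq\omega_1$ with $\|T(1_{A_\xi}-1_{A_\eta})\|_\X\leq 1/3$ for all $\xi,\eta\in\Xi$, and only then chooses the interleaved ordinals $\xi_\alpha,\eta_\alpha$ from inside $\Xi$. Since both characteristic functions entering $f_\alpha$ come from the same concentrated cluster, each \emph{individual} image satisfies $\|T(f_\alpha)\|_\X\leq 1/3$, whence $1\leq\|f_\alpha\|_T\leq 4/3$ and $\bigl\|f_\alpha/\|f_\alpha\|_T-f_\alpha\bigr\|_\infty\leq 1/4$; the $2$-equilateral estimate from Proposition \ref{luzin-2} then survives normalization, giving $\|g_\alpha-g_\beta\|_T\geq\|g_\alpha-g_\beta\|_\infty\geq 2-1/4-1/4=3/2$. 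Your proof can be repaired by inserting exactly this preliminary concentration of the set $\{T(1_{A_\xi}):\xi<\omega_1\}$ and drawing the pairs $\xi_\alpha,\eta_\alpha$ from the resulting uncountable subset; the rest of your outline then goes through.
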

\begin{proof} As $\X_{\mathcal L}$ is a subspace of $\X_\A$ we can assume that $\A=\mathcal L$.
By Lemma \ref{separable} there is an uncountable
subset $\Xi\subseteq \omega_1$ such that
$$\|T(1_{A_\xi}-1_{A_\eta})\|\leq 1/3\leqno (*)$$
for every $\xi, \eta\in \Xi$. 
Let $\{\xi_\alpha: \alpha<\omega_1\}$
and $(\eta_\alpha: \alpha<\omega_1)$  be enumerations of two uncountable and disjoint subsets of $\Xi$
satisfying for all $\beta<\alpha<\omega_1$  the following conditions
$$\xi_\beta<\eta_\beta<\xi_\alpha<\eta_\alpha.$$
By Proposition \ref{luzin-2} there is an uncountable  $\Gamma\subseteq\omega_1$
such that   $\{f_\alpha: \alpha\in \Gamma\}$ is a $2$-equilateral set
of the unit sphere of $(\X_\A, \|\ \|_\infty)$, where for each $\alpha\in \Gamma$ we have
$$f_\alpha=1_{A_{\xi_\alpha}}-1_{A_{\eta_\alpha}}.$$
For $\alpha\in \Gamma$ consider
$$g_\alpha=f_\alpha/\|f_\alpha\|_T.$$
We claim that $\{g_\alpha: \alpha\in \Gamma\}$ is the desired
$(1+1/2)$-separated set of the unit sphere of $(\X_\A, \|\ \|_T)$.
To prove it take $\beta<\alpha$ with $\alpha, \beta\in \Gamma$.
As $$\|g_\alpha-g_\beta\|_T=\|g_\alpha-g_\beta\|_{\infty}+\|T(g_\alpha)-T(g_\beta)\|_\X,$$
it enough to prove that
$\|g_\alpha-g_\beta\|_{\infty}\geq 1+1/2$. We have
$$\|g_\alpha-g_\beta\|_{\infty}=
\|{{f_\alpha}\over{\|f_\alpha\|_T}}-{{f_\alpha}\over{\|f_\alpha\|_T}}\|_\infty
\geq \|f_\alpha-f_\beta\|_\infty-\|{{f_\alpha}\over{\|f_\alpha\|_T}}-f_\alpha\|_\infty
-\|{{f_\beta}\over{\|f_\beta\|_T}}-f_\beta\|_\infty.$$

Since $\alpha, \beta\in \Gamma$ we have $\|f_\alpha\|_\infty=\|f_\beta\|_\infty=1$,
$\|f_\alpha-f_\beta\|_\infty=2$.
So now we estimate 
$$\|{{f_\alpha}\over{\|f_\alpha\|_T}}-f_\alpha\|_\infty=
\Big| {1\over{\|f_\alpha\|_\infty+\|T(f_\alpha)\|_\X}}
 -1\Big|\|f_\alpha\|_\infty=\Big| {1\over{1+\|T(f_\alpha)\|_\X}} -1\Big|.
$$
By (*) we have $\|T(f_\alpha)\|_\X\leq 1/3$, and so
$$3/4= {1\over{1+1/3}}\leq {1\over{1+\|T(f_\alpha)\|_\X}} \leq 1,$$
and so
$$\|{{f_\alpha}\over{\|f_\alpha\|_T}}-f_\alpha\|_\infty\leq 1/4.$$
The same calculation works for $\|{{f_\beta}\over{\|f_\beta\|_T}}-f_\beta\|_\infty,$
so we conclude that 
$$\|g_\alpha-g_\beta\|_{\infty}\geq 2-1/4-1/4= 1+1/2$$ as required.
\end{proof}

\section{Final remarks and open problems}

Recall that the Ramsey theorem says that given any $k\in \N$
and any coloring $c: [\N]^2\rightarrow \{1, \dots, k\}$ there is
an infinite $A\subseteq \N$ which is $i$-monochromatic for $c$ for some $1\leq i\leq k$, that
is $c[[A]^2]=\{i\}$. On the other hand colorings of all pairs of uncountable cardinals 
not bigger than  continuum may not have uncountable monochromatic sets as already shown by Sierpi\'nski
(24.23  of \cite{komjath}).
The phenomena considered in this paper can be seen from a Ramsey theoretic point of view.
For example, given a Banach space $X$ one can  consider a coloring
$c:[S_{\X}]^2\rightarrow \{-1,0,1\}$ given by 
$$
  c_\X(x, y)=
  \begin{cases}
    -1 & \text{if $\|x-y\|<1$} \\
    0 & \text{if $\|x-y\|=1$} \\
    1 & \text{if $\|x-y\|>1$}
  \end{cases}
$$
A$(1+)$-separated set in $S_{\X}$ is a $1$-monochromatic set for $c_\X$ and 
by an observation of Terenzi mentioned in the introduction the
existence of an uncountable equilateral set in $\X$ is equivalent to the existence
of an uncountable $0$-monochromatic set for $c_\X$. On the other hand our main result says that
there is a nonseparable Banach space $\X$ such that every uncountable subset of
$S_{\X}$ contains an uncountable ($-1$)-monochromatic set. It is not accidental
that the separable results of Kottman or
 Elton and Odell employ the Ramsey theorem (\cite{kottman, elton-odell}).
 However, we are still far from understanding the structure
 of monochomatic sets for the colorings
 $c_\X$. For example, it is natural to ask if the following dichotomy holds:
 
 \begin{question} Is it true that the unit sphere of every nonseparable Banach space $\X$ 
 either contains a $(1+)$-separated set or every uncountable subset
 of the sphere $S_\X$ contains an uncountable subset which is $(1-)$-concentrated
 (or $(1-\varepsilon)$-concentrated for some $\varepsilon>0$)?
 \end{question}
 Here by $(1-)$-concentrated we mean a set $\Y\subseteq \X$ such that $\|x-y\|<1$
 for any two distinct $x, y\in \Y$. 
 In fact, if the above
 dichotomy does not hold, it would be interesting to search for
 a nonseparable Banach space $\X$ such that $c_\X$ has no uncountable
 $1$-monochromatic sets and the family of all uncountable $(-1)$-monochromatic sets is minimal in some sense.
 Another aspect of our paper is the following general question:
 
 \begin{question} What nonseparable metric spaces can be isometrically embedded in every
 nonseparable Banach space?
 \end{question}
 
 The Riesz lemma implies that for every $\varepsilon>0$ 
 every nonseparable Banach space of density $\kappa$ contains
 a metric space $(M, d)$, where $M=\{0\}\cup\{x_\xi: \xi<\kappa\}$, and where
 $d(0, x_\xi)=1$ for all $\xi<\kappa$ and $d(x_\xi, x_\eta)>1-\varepsilon$
 for  any $\xi<\eta<\kappa$. In this article, among other results, we have shown  in ZFC that uncountable
 metric spaces, where distances between any two distinct points are the same, do not 
 embed isometrically into all nonseparable Banach spaces.
 The above question has been considered on the countable
 level in \cite{mer-iso}. On the finite level, for example    Shkarin  has proved in \cite{shkarin}
 that every finite ultrametric space (a metric space where the distance $d$
 satisfies $d(x, z)\leq\max(d(x, y), d(y, z))$ for any points $x, y, z$) 
 isometrically embeds in any infinite dimensional Banach space.

\bibliographystyle{amsplain}

\end{document}